\def\thetitle{Greedy maximal independent sets via local limits}
\definecolor{CombinatoricaAqua}{HTML}{00698C}
\definecolor{CombinatoricaBlue}{HTML}{3A3293}
\definecolor{CombinatoricaBrown}{HTML}{66220C}
\definecolor{CombinatoricaRed}{HTML}{DF2A27}
\definecolor{HarvardCrimson}{rgb}{0.6471, 0.1098, 0.1882}
\let\reftagform@=\tagform@
\def\tagform@#1{\maketag@@@
	{(\ignorespaces\textcolor{CombinatoricaBrown}{#1}\unskip\@@italiccorr)}}
\renewcommand{\eqref}[1]{\textup{\reftagform@{\ref{#1}}}}
\Crefname{fact}{Fact}{Facts}
\Crefname{claim}{Claim}{Claims}
\Crefname{assumption}{Assumption}{Assumptions}
\declaretheoremstyle[
spaceabove=\topsep, spacebelow=\topsep,
headfont=\color{CombinatoricaBrown}\normalfont\bfseries,
bodyfont=\itshape,
]{thm}
\declaretheoremstyle[
spaceabove=\topsep, spacebelow=\topsep,
headfont=\color{CombinatoricaBrown}\normalfont\bfseries,
bodyfont=\normalfont,
]{dfn}
\declaretheoremstyle[
spaceabove=0.5\topsep, spacebelow=0.5\topsep,
%headfont=\color{CombinatoricaBrown}\normalfont\itshape,
headfont=\color{CombinatoricaBrown}\normalfont\bfseries,
bodyfont=\normalfont,
]{rmk}
\declaretheorem[style=thm,parent=section]{theorem}
\declaretheorem[style=thm,sibling=theorem]{lemma}
\declaretheorem[style=thm,sibling=theorem]{claim}
\declaretheorem[style=thm,sibling=theorem]{proposition}
\declaretheorem[style=definition,numbered=no]{acknowledgement}
\renewcommand{\PrintNames@a}[4]{%
	\PrintSeries{\name}
	{#1}
	{}{ and \set@othername}
	{,}{ \set@othername}
	{}{ and \set@othername}
	{#2}{#4}{#3}%
}
\def\mathcolor#1#{\@mathcolor{#1}}
\def\@mathcolor#1#2#3{%
	\protect\leavevmode
	\begingroup
	\color#1{#2}#3%
	\endgroup
}
\definecolor{Red}{rgb}{0.618,0,0}
\definecolor{Blue}{rgb}{0,0,1}
\definecolor{Green}{rgb}{0,0.298,0}
\title{\thetitle}
\author{
	Michael Krivelevich%
	\thanks{School of Mathematical Sciences, Tel Aviv University, Tel Aviv 6997801, Israel. E-mail:
		\href{mailto:krivelev@tauex.tau.ac.il}
		{\tt krivelev@tauex.tau.ac.il}.
    Research supported in part by USA-Israel BSF grant~2018267, and by ISF grant 1261/17.}
	\and Tam\'as M\'esz\'aros%
	\thanks{Fachbereich Mathematik und Informatik, Kombinatorik und Graphentheorie, Freie Universit\"at Berlin, Arnimallee 3, 14195 Berlin, Germany. E-mail:
	    \href{mailto:tmeszaros87@gmail.com}{\tt tmeszaros87@gmail.com}.
	    Research supported by the Berlin Mathematics Research Center MATH+.}
	\and Peleg Michaeli%
	\thanks{%
    School of Mathematical Sciences, Tel Aviv University, Tel Aviv 6997801, Israel,
    and Department of Mathematical Sciences,  Carnegie Mellon University, Pittsburgh, PA 15213, USA.
    E-mail: \href{mailto:pelegm@cmu.edu}{\tt pelegm@cmu.edu}.}
	\and Clara Shikhelman%
	\thanks{Chaincode Labs, 450 Lexington Avenue, New York, NY 10017, USA. E-mail:
		\href{mailto:clara.shikhelman@gmail.com}
		{\tt clara.shikhelman@gmail.com}.}
}
\newcommand{\NN}{\mathbb{N}}
\newcommand{\ZZ}{\mathbb{Z}}
\newcommand{\cE}{\mathcal{E}}
\newcommand{\cN}{\mathcal{N}}
\newcommand{\cQ}{\mathcal{Q}}
\newcommand{\sA}{\mathsf{A}}
\newcommand{\defn}[1]{\textbf{#1}}
\newcommand{\term}[1]{\textit{#1}}
\newcommand{\eps}{\varepsilon}
\newcommand{\T}{\mathsf{T}}
\newcommand{\F}{\mathsf{F}}
\newcommand{\sm}{\smallsetminus}
\DeclareMathOperator{\dist}{dist}
\newcommand{\vect}{\mathbf}
\newcommand{\pr}[0]{\mathbb{P}}
\newcommand{\E}[0]{\mathbb{E}}
\DeclareMathOperator{\Var}{Var}
\DeclareMathOperator{\cov}{cov}
\newcommand{\bin}{\mathsf{Bin}}
\newcommand{\pois}{\mathsf{Pois}}
\newcommand{\iid}{\textbf{iid}}
\newcommand{\fiid}{\textbf{fiid}}
\newcommand{\whp}{\textbf{whp}}
\newcommand{\pgf}{\textbf{pgf}}
\newcommand{\sfpg}{\textbf{sfpg}}
\DeclareMathOperator{\rad}{rad}
\DeclareMathOperator{\rng}{rng}
\newcommand{\gis}{\mathbf{I}}
\newcommand{\gic}{\mathfrak{i}}
\newcommand{\gir}{\iota}
\newcommand{\egic}{\bar{\mathfrak{i}}}
\newcommand{\egir}{\bar{\iota}}
\newcommand{\dloc}{d_{\mathrm{loc}}}
\newcommand{\Gdot}{\mathcal{G}_{\bullet}}
\newcommand{\past}[1]{\mathcal{P}_{#1}}
\newcommand{\toloc}{\xrightarrow{\mathrm{loc}}}
\newcommand{\ary}[1]{\mathsf{T}_{#1}}
\newcommand{\istar}[1]{\mathcal{S}_{#1}}
\newcommand{\gw}[1]{\mathcal{T}_{#1}}
\newcommand{\reg}[1]{\mathbb{T}_{#1}}
\newcommand{\sbgw}[1]{\hat{\mathcal{T}}_{#1}}
\DeclareMathOperator{\KC}{KC}
\newcommand{\concat}{%
  \mathbin{\raisebox{1ex}{\scalebox{.7}{$\frown$}}}%
}
\begin{document}
\maketitle

\begin{abstract}
  The random greedy algorithm for finding a maximal independent set in a graph
  constructs a maximal independent set by inspecting the graph's vertices in a
  random order,
  adding the current vertex to the independent set if it is not
  adjacent to any previously added vertex.
  In this paper,
  we present a general framework for computing the asymptotic density of the
  random greedy independent set
  for
  sequences of (possibly random) graphs
  by employing a notion of
  local convergence.
  We use this framework to give straightforward proofs
  for results on previously studied families of graphs,
  like paths and binomial random graphs,
  and to study new ones,
  like random trees and sparse random planar graphs.
  We conclude by analysing the random greedy algorithm more closely when the base graph is a tree.
\end{abstract}

\section{Introduction}\label{sec:intro}
An \defn{independent set} in a graph is a set of vertices, no two of which are adjacent.
The problem of finding large independent sets is fundamental in computer science, with many real-world applications.
Computing the size of a \emph{maximum} independent set (known as the \term{independence number} of a graph) is known to be NP-hard on general graphs~\cite{Kar72},
and is even hard to approximate~\cite{FGLSS96}.
A natural way to try to efficiently produce a large independent set in an input graph $G$ is to output a \emph{maximal} independent set (MIS), namely, an independent set to which no other vertex can be added without destroying its property of being independent.
While in principle a poorly chosen MIS can be very small (like, say, the star centre in a star), one might hope that quite a few of the maximal independent sets will have a size comparable in some quantitative sense to the independence number of $G$.

This paper\footnote{%
This is an extended and revised version of a conference version presented at the 31st International Conference on Probabilistic, Combinatorial and Asymptotic Methods for the Analysis of Algorithms (AofA2020)~\cite{KMMS20}.}
studies the \defn{random greedy algorithm} for producing an MIS, which is defined as follows.
Given an input graph $G$, the algorithm first orders its vertices uniformly at random and then constructs an independent set $\gis=\gis(G)$ by considering each of the vertices one by one in order, adding it to $\gis$ if the resulting set does not span an edge.
(Note that the set $\gis$ is, in fact, the set of vertices coloured in the first colour in a random greedy proper colouring of $G$.)
A basic quantity to study, which turns out to have numerous applications, is the proportion of the yielded independent set $|\gis|/|V(G)|$ (which we call the \defn{greedy independence ratio}).
In particular, it is of interest to study the asymptotic behaviour of this quantity for natural (random) graph sequences.

Due to its simplicity, the random greedy algorithm has been studied extensively by various authors in different fields, ranging from combinatorics~\cite{Wor95}, probability~\cite{RV17} and computer science~\cite{FN18}
to chemistry~\cite{Flo39}.
As early as 1931, this model was studied by chemists under the name \emph{random sequential adsorption} (RSA), focusing primarily on $d$-dimensional grids.
The $1$-dimensional case (a path) was solved by Flory~\cite{Flo39} (see also
\cite{Pag59}), who showed that the expected greedy independence ratio tends to
$\zeta_2=(1-e^{-2})/2$ as the path length tends to infinity.

A continuous analogue, in which ``cars'' of unit length ``park'' at random free locations on the interval $[0,X]$, was introduced (and solved) by R\'enyi~\cite{Ren58}, under the name \term{car-parking process}.
The limiting density, as $X$ tends to infinity, is called \term{R\'enyi's parking constant}, and $\zeta_2$ may be considered as its discrete counterpart (see, e.g.,~\cite{MathC}).
Following this terminology, the final state of the car-parking process is often called the \emph{jamming limit} of the graph, and the density of this state is called the \emph{jamming constant}.
For dimension $2$, Pal\'asti~\cite{Pal60} conjectured, in the continuous case (where unit square ``cars'' park in a larger square), that the limiting density is R\'enyi's parking constant squared.
This conjecture may be carried over to the discrete case, but to the best of our knowledge, in both cases, it remains open.
For further details, see~\cite{MathC} (see also~\cite{Eva93} for an extensive survey on RSA models, and~\cite{DLM16} for generalisations of the RSA model).

In combinatorics, the greedy algorithm for finding an MIS was analysed in order to give a lower bound on the (usually asymptotic) typical independence number of (random) graphs.\footnote{%
For this purpose, more sophisticated local (and non-local) algorithms have been analysed. Nevertheless, as we mention later, the random greedy algorithm is, perhaps surprisingly, at least as good as any other local algorithm for various random graph models.
In fact, in many problems, the random greedy algorithm is essentially the best known efficient algorithm available.}
The asymptotic greedy independence ratio of binomial random graphs was studied by McDiarmid~\cite{McD84}
(but see also~\cites{GM75,BE76}; for large deviation estimates, see~\cites{BGJM22,Kol22}).
The asymptotic greedy independence ratio of random regular graphs was studied by Wormald~\cite{Wor95}, who used the so-called \term{differential equation method} (see \cite{Wor99} for a comprehensive survey; see also \cite{War19+} for a short proof of Wormald's result).
His result was further extended in~\cite{LW07} to any sequence of regular graphs with growing girth
(see also \cites{HW16,HW18} for similar extensions for more sophisticated algorithms%
\footnote{For example, there is a series of works obtaining ever better lower bounds for the independence ratios in (random/high-girth) $3$- and $4$-regular graphs, using local algorithms;
see, e.g.,~\cites{DZ09,CGHV15,HW16,Cso16+}.}%
).
The case of uniform random graphs with given degree sequences was studied (independently) in~\cite{BJM17} and~\cite{BJL17}.

Apart from being basic to combinatorial optimisation, the random greedy algorithm for producing an MIS is of pure theoretic interest:
it is a simple and natural stochastic process which, in its most general form, emulates many previously studied processes.
One such model is the \term{randomised greedy matching} (see~\cite{DF91}), which can be defined as the random greedy MIS on the corresponding line graph.
This model was studied by Dyer and Frieze~\cite{DF91} for general graphs and by Dyer, Frieze and Pittel~\cite{DFP93} for sparse uniform random graphs.
The distribution of $\gis$ also appears naturally as a marginal of a random greedy colouring of the graph.

In a more general setting, where the random greedy algorithm runs on a \term{hypergraph}, the model recovers, in particular, the \emph{triangle-free process} (or, more generally, the \emph{$H$-free process}).
In this process, which was first introduced in~\cite{ESW95}, we begin with the empty graph, and at each step, add a random edge as long as it does not create a copy of a triangle (or of $H$).
To recover this process, we take the hypergraph whose vertices are the edges of the complete graph and whose hyperedges are the triples of edges that span a triangle (or $k$-sets of edges that form a copy of $H$, if $H$ has $k$ edges).
Bohman's key result~\cite{Boh09} is that for this hypergraph, $|\gis|$ is with high probability (\whp{})\footnote{That is, with probability tending to $1$ as $n$ tends to infinity.} $\Theta(n^{3/2}\sqrt{\ln{n}})$, where $n$ is the number of vertices.
Bohman and Keevash~\cite{BK13} and Fiz Pontiveros, Griffiths and Morris~\cite{FPGM20} later found the exact asymptotics.
Similar results were obtained for the complete graph on four vertices by Warnke~\cite{War14K4} and for cycles independently by Picollelli~\cite{Pic14} and by Warnke~\cite{War14Cl}.
For a discussion about the general setting, see~\cite{BB16}.
An additional celebrated model that can be emulated by a random greedy MIS on a (nonuniform) hypergraph is the \term{minimum spanning tree} (MST).

Consider the following alternative but equivalent definition of the model.
Assign an independent uniform \emph{label} from $[0,1]$ to each vertex of the graph, and consider it as the \emph{arrival time} of a particle at that vertex.
All vertices are initially vacant, and a vertex becomes occupied at the time denoted by its label if and only if all of its neighbours are still vacant at that time.
Clearly, we do not need to worry that two particles will arrive at the same time.
The set of occupied vertices at time $1$ is exactly the greedy MIS.
We may think of the resulting MIS as a \term{factor of iid} (\fiid{})\footnote{%
The letters \iid{} abbreviate \term{independent and identically distributed}.}, %
meaning, informally, that there exists a ``local'' rule, unaware of the ``identity'' of a given vertex, that determines whether that vertex is occupied.
It was conjectured (formally by Hatami, Lov\'asz and Szegedy~\cite{HLS14}, but see discussion in~\cite{GS17}) that, using a proper rule, \fiid{}
can produce an asymptotically maximum independent set in random regular graphs of high degree.
However, this was disproved by Gamarnik and Sudan~\cite{GS17}.
In fact, they showed  that this kind of local algorithms has a uniformly limited
power for a sufficiently large degree, and later Rahman and Vir\'ag~\cite{RV17}
showed that the density of \fiid{} independent sets in regular trees and Poisson Galton--Watson trees with a large average degree, is asymptotically at most {\em half}-optimal, concluding (after projecting to random regular graphs or to binomial random graphs) that local algorithms cannot achieve better.
In particular, this implies that the random greedy algorithm is, asymptotically, at least as good as any other local algorithm.

In general graph sequences, however, local algorithms may perform arbitrarily close to optimal.
A trivial example is the set of stars, where the greedy algorithm typically
performs perfectly.
A less trivial example is that of uniform random trees.
The expected independence ratio of a uniform random tree is the unique solution of the equation $x=e^{-x}$ (see~\cite{MM73}), which is approximately $0.5671...$, while the greedy algorithm yields an independent set of expected density $1/2$ as we will see in \cref{sec:sbgw}.

Finally, we note that the following parallel/distributed algorithm gives a further way to look at the maximal independent set generated by the greedy algorithm.
After (randomly) ordering the vertices, we add to $\gis$ all the \term{sinks}, namely, the vertices which appear before their neighbours in the order, and then remove them and their neighbours from the graph.
We repeat these steps until the graph is empty.
Formulated this way, the algorithm is straightforward to implement and requires only local communication between the nodes. Also, conditioning on the initial random ordering, it is deterministic, a property that appears to be important (see, e.g.,~\cite{BFGS12}).
A central question of interest is the number of rounds it takes the algorithm to terminate.
In~\cite{FN18} it was shown that it terminates in $O(\log{n})$ steps \whp{} on any $n$-vertex graph, and that this is tight.
Thus, even though these algorithms may be suboptimal, they are strikingly simple and surprisingly efficient.

\subsection{Our results}
The goal of this paper is to present a unified, comprehensive, and easy-to-apply framework for analysing the random greedy independence ratio.
Indeed, several previous results (e.g., results from~\cites{Flo39,Pag59,McD84,Wor95,LW07,NV21}), as well as new ones, can be derived as special cases of theorems that we present below.
The general approach is to study a suitable limiting object, typically a random rooted infinite graph, which captures the local view of a typical vertex, and to calculate the probability that its root appears in a random independent set in this graph, which is created according to some natural ``local'' rule, to be described later.
We show that this probability approximates the expected greedy independence ratio and give tools to calculate that probability precisely in many cases of interest.
This change of view from a sequence of graphs with varying underlying probability spaces to an infinite object with a fixed underlying probability space is known in the literature as the \term{objective method}~\cite{AS04}.

Let us formulate this more precisely.
A \defn{random labelling} of a (possibly infinite, possibly random) graph $G=(V,E)$
is a process $\sigma=(\sigma_v)_{v\in V}$ consisting of \iid{} random variables $\sigma_v$,
each distributed uniformly in $[0,1]$.
If $G$ is finite, we let $\gis_\sigma(G)$ denote the random greedy maximal independent set of $G$ obtained by the ordering induced by $\sigma$
(note that it is measurable w.r.t.\ $G$ and $\sigma$).
We also let $\gir(G)$ denote the density of $\gis_\sigma(G)$ and $\egir(G)$ denote its expectation (taken over the distribution of $G$ and over the labelling $\sigma$).
The past of a vertex $v$, denoted $\past{v}$, is the (random) set of vertices in $G$ reachable from $v$ by a monotone decreasing path (with respect to $\sigma$).
Suppose $(U,\rho)$ is a random rooted locally finite graph (that is, $(U,\rho)$ is a distribution supported on rooted locally finite graphs).
We say that $(U,\rho)$ has \defn{nonexplosive growth} if the past of $\rho$ in $U$, with respect to a random labelling $\sigma$, is almost surely finite.
For such $(U,\rho)$ we may define
\begin{equation*}
  \gir(U,\rho)=\pr[\rho \in \gis_\sigma(U[\past{\rho}])].
\end{equation*}
We say that a graph sequence $G_n$ converges locally to $(U,\rho)$, and denote it by $G_n\toloc(U,\rho)$, if for every $r\ge 0$, the ball of radius $r$ around a uniformly chosen point from $G_n$ converges in distribution to the ball of radius $r$ around $\rho$ in $U$.
To make this notion precise, we need to endow the space of rooted locally finite connected graphs with a topology.
This will be done rigorously in \cref{sec:locallimit}.
The following key tool motivates the definitions above.
\begin{proposition}\label{thm:mean}
  If $G_n\toloc (U,\rho)$ and $(U,\rho)$ has nonexplosive growth then
  $\egir(G_n) \to \gir(U,\rho)$.
\end{proposition}
\noindent
It is easy to see that $\gir(U,\rho)$ is at least $\E[(d(\rho)+1)^{-1}]$;
however, the expected value of $d(\rho)$ may be infinite, even if $(U,\rho)$ is a local limit of a sequence of finite graphs.

~

\Cref{thm:mean}, as a more or less straightforward application of the objective method, can be considered folklore.
However, at present there does not appear to be an explicit statement of the above form in the literature.
For completeness, we provide a short proof in \cref{sec:locallimit}.

~

With some mild growth assumptions on the graph sequence, we also obtain asymptotic concentration of the greedy independence ratio around its mean.
For a graph $G$ let $\cN_G(r)$ be the random variable counting the number of paths of length at most $r$ from a uniformly chosen random vertex of $G$.
For two real numbers $x,y$ denote by $x\wedge y$ their minimum.
For a graph sequence $G_n$, let
\begin{equation*}\label{mu*}
  \mu^*(r) = \lim_{M\to\infty} \limsup_{n\to\infty} \E[\cN_{G_n}(r)\wedge M].
\end{equation*}
We say that $G_n$ has subfactorial path growth (\sfpg{}) if $\mu^*(r) \ll_r r!$ (by $g_1(r)\ll_r g_2(r)$ we mean that $\lim_{r\to\infty}g_1(r)/g_2(r)=0$).
Note that every graph sequence with uniformly bounded degrees has \sfpg{}, but there are graph sequences with unbounded degrees, and even with unbounded average degree, which still have \sfpg{}.
For most cases, and for all of the applications presented in this paper, requiring that the somewhat simpler expression $\limsup_{n\to\infty}\E[\cN_{G_n}(r)]$ is subfactorial would have sufficed; however, requiring that the ``truncated'' mean $\mu^*(r)$ is subfactorial is less strict, and is more natural for the following reason: if the graph sequence converges locally, then $\mu^*(r)$ is the expected number of paths of length at most $r$ in the limit.
In addition, while a sequence of graphs with \sfpg{} does not necessarily have a local limit, it does have a locally convergent subsequence,
and any limit of such a sequence will have nonexplosive growth (see proof of \cref{thm:main}).

~

For two functions $f_1(n),f_2(n)$ write $f_1(n)\sim f_2(n)$ if $f_1(n)=(1+o(1))f_2(n)$.
We are now ready to state our concentration result.

\begin{theorem}\label{thm:main}
  If $G_n$ has \sfpg{} and $G_n\toloc (U,\rho)$ then $\gir(G_n) \sim \gir(U,\rho)$ with high probability.
\end{theorem}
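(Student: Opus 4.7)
The plan is to use Theorem~\ref{thm:mean}, which gives $\E[\gir(G_n)] \to \gir(U,\rho)$, and upgrade this convergence of expectations to convergence in probability via a second-moment argument. Write $\gir(G_n) = n^{-1}\sum_v I_v$ with $I_v = \mathbbm{1}[v \in \gis(G_n)]$, and recall that $I_v$ depends only on the labels in $\past{v}$. For each $r \ge 0$, I introduce a local approximation: let $J_v^{(r)}$ be the indicator that $v$ is selected when the greedy algorithm is run on $G_n[B_r(v)]$ using only the labels in $B_r(v)$, and set $\gir^{(r)}(G_n) = n^{-1}\sum_v J_v^{(r)}$. Then $J_v^{(r)}$ is measurable with respect to the labels in $B_r(v)$ and agrees with $I_v$ on the event $\past{v} \subseteq B_r(v)$.

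Step~1 (approximation error). A fixed path of length $k$ is monotone decreasing with probability $1/k!$. By a union bound, $\pr[\past{v} \not\subseteq B_r(v)]$ is at most the expected number of length-$(r+1)$ paths from $v$ divided by $(r+1)!$. Averaging over $v$ and truncating in the spirit of $\mu^*$ --- splitting by whether the number of paths from $v$ of length $\le r+1$ exceeds a threshold $M$ and bounding the tail probability via Markov applied to the truncated count $\cN \wedge 2M$ --- one obtains $\limsup_n \E|\gir(G_n) - \gir^{(r)}(G_n)| \le \mu^*(r+1)/M + \mu^*(r+1)/(r+1)!$, which can be made arbitrarily small by taking $r$ large (by \sfpg{}) and then $M$ large.

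Step~2 (variance and conclusion). Whenever $B_r(u) \cap B_r(v) = \es$, the variables $J_u^{(r)}$ and $J_v^{(r)}$ depend on disjoint collections of independent labels (given $G_n$), hence are conditionally independent; only pairs within graph distance $2r$ contribute to the covariance sum, yielding
\[
  \Var[\gir^{(r)}(G_n) \mid G_n] \le \frac{1}{n^2}\sum_u |B_{2r}(u)| \le \frac{1}{n}(1 + \cN_{G_n}(2r)),
\]
whose expectation is $o(1)$ for fixed $r$ via another truncation of $\cN$. Given $\eps > 0$, I would choose $r$ making the $L^1$-error in Step~1 at most $\eps$; Chebyshev applied to Step~2 then gives $\gir^{(r)}(G_n)$ within $\eps$ of its mean whp, and the triangle inequality combined with $|\E[\gir(G_n)] - \E[\gir^{(r)}(G_n)]| \le \eps$ yields $|\gir(G_n) - \E[\gir(G_n)]| < 3\eps$ whp. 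Theorem~\ref{thm:mean} then delivers $\gir(G_n) \to \gir(U,\rho)$ in probability; since $\gir(U,\rho) > 0$, this is the claimed $\gir(G_n) \sim \gir(U,\rho)$ with high probability.

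The main obstacle is the truncation intrinsic to the \sfpg{} condition: it controls only $\E[\cN_{G_n}(r) \wedge M]$, not $\E[\cN_{G_n}(r)]$, so both the approximation-error and the variance estimates must be split into bulk and tail parts according to a threshold $M$, with the parameters $n, M, r$ sent to infinity in the correct order (first $n$, then $M$, then $r$). A secondary subtlety, relevant when $G_n$ is itself random, is that one must also show that $\E[\gir^{(r)}(G_n) \mid G_n]$ concentrates; this reduces to a law of large numbers for the local empirical distribution of $r$-balls, which follows again from $G_n \toloc (U,\rho)$.
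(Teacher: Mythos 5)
Your approach matches the paper's in strategy: a second-moment argument built on localization (via \sfpg{} and a truncation of $\cN_{G_n}$), a variance bound coming from conditional independence of distant vertices, and Chebyshev together with \cref{thm:mean}. The paper realizes the localization through an $r$-truncated exploration-decision algorithm and bounds $\cov[R_u,R_v]$ directly for two independent uniform vertices (\cref{lem:corr,cl:cov,cl:var}), while you introduce the explicit surrogate $\gir^{(r)}(G_n)$ and sum covariances over pairs; both rest on the same two facts, namely \cref{cl:past:rad} (monotone decreasing paths from a random vertex are short) and \cref{cl:dist} (random pairs of vertices are distant), and your triangle-inequality bookkeeping takes the place of the paper's identity $\Var[\gir(G_n)]=\cov[R_u,R_v]$.

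The ``secondary subtlety'' you flag at the end is, however, a genuine gap if brushed off the way you do. The paper's notion of $G_n \toloc (U,\rho)$ is annealed: only the \emph{law} of the rooted $r$-ball, averaged over $G_n$ and the root, is required to converge, and this alone does not imply that $\E[\gir^{(r)}(G_n)\mid G_n]$ concentrates. Take $G_n$ to be a perfect matching on $[n]$ with probability $\tfrac12$ and a random $4$-regular graph with probability $\tfrac12$: this sequence has \sfpg{} and a (random) local limit, yet $\gir(G_n)$ is bimodal, concentrating near $\tfrac12$ or near $\tfrac13$, and does not converge in probability to $\gir(U,\rho)=\tfrac{5}{12}$. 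So the step does not ``follow again from $G_n \toloc (U,\rho)$''; it requires either deterministic $G_n$ or a quenched strengthening (convergence in probability of the empirical $r$-ball distribution). For what it is worth, the paper's own argument is exposed to the same issue --- \cref{lem:corr} is stated for a fixed graph $G$ but is applied in \cref{cl:cov} to a possibly random $G_n$ --- and is harmless in all the paper's applications because those local limits are deterministic and quenched convergence holds there; but your write-up should not present this as an automatic consequence of the stated hypotheses.
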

\Cref{thm:main} can (and will) be used as a tool to estimate $\gir(G_n)$ for various graph sequences, as we will see in \cref{sec:applications}.

~

  We remark that
  Gamarnik and Goldberg~\cite{GG10} have already established concentration of   $\gir(G_n)$ around its mean, assuming that the degrees of $G_n$ are uniformly bounded.
  Here we relax that assumption by not even requiring a bounded average degree.
  For a more detailed comparison, see \cref{sec:comp}.

\subsubsection{Locally tree-like graph sequences}

We call a (random) graph sequence \defn{locally tree-like} when the limiting object is supported on rooted trees.
Our next result is a general differential-equations based tool for analysing the asymptotics of the greedy independence ratio of locally tree-like (random) \sfpg{} graph sequences, with the restriction that their limit may be emulated by a \emph{simple} branching process with at most countably many types.
Roughly speaking, a \defn{multitype branching process} is a rooted tree, in which each node is assigned a \emph{type}, and the number and types of each node's ``children'' follow a law that depends solely on the node's type and is independent for distinct nodes.
Such a branching process is called \defn{simple} if each such law is a product measure.
We give formal definitions in \cref{sec:de}.
The following theorem reduces the problem of calculating $\gir(U,\rho)$ in these cases to the problem of solving a (possibly infinite) system of ODEs.
Here, given a (countable) set of types $T$, for every two types $k,j\in T$ we denote by $\mu^{k\to j}$ the distribution of the number of nodes of type $j$ for a parent of type $k$.

\begin{theorem}\label{thm:de}
  Let $(U,\rho)$ be a simple multitype branching process with finite or countable type set $T$,
  root distribution $\dot\mu$
  and offspring distributions $\mu^{k\to j}$.
  For every $x\in[0,1]$ and $k,j\in T$ let $\mu^{k\to j}_x=\bin(\mu^{k\to j},x)$
  denote the distribution
  of the number of children of type $j$ of a node of type $k$ with random label at most $x$.
  Then,
  \begin{equation}\label{eq:iota}
    \gir(U,\rho) = \sum_{k\in T}y_k(1)\dot\mu(k),
  \end{equation}
  where $\{y_k\}_{k\in T}$ is a solution to the following system of ODEs:
  \begin{equation}\label{eq:fundamental}
  y_k'(x) =
  \sum_{\ell \in \NN^T}
  \prod_{j\in T} \mu^{k\to j}_x(\ell_j)
  \left(1-\frac{y_j(x)}{x}\right)^{\ell_j},
  \qquad y_k(0)=0.
  \tag{$*$}
  \end{equation}
\end{theorem}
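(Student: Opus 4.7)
The plan is to derive a recursion for the probability that a vertex lies in the greedy MIS of its own past and then to recognise this recursion as the differential system \eqref{eq:fundamental}. Let $p_k(x)$ denote the probability that a type-$k$ vertex with label $x$ is in the MIS of its past in the branching process rooted at it, and set
\[ y_k(x) := \int_0^x p_k(t)\,dt. \]
Then $y_k(0)=0$, and $y_k(1)$ is the unconditional probability that a type-$k$ root lies in the MIS, so averaging over the root type yields $\gir(U,\rho) = \sum_{k \in T} \dot\mu(k)\, y_k(1)$, which is \eqref{eq:iota}. It remains to identify $y_k'(x) = p_k(x)$ with the right-hand side of \eqref{eq:fundamental}.

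To do so, I would condition on $\rho$ having type $k$ and label $x$. Monotonically decreasing paths from $\rho$ descend into its subtree, so $\rho$ enters the MIS iff none of its children with label below $x$ enters the MIS within its own subtree. The defining feature of a \emph{simple} multitype branching process is that the subtrees hanging off $\rho$ are mutually independent, the subtree at a type-$j$ child is distributed as the process rooted at type $j$, and the numbers of children of distinct types are independent, with the type-$j$ count distributed as $\mu^{k\to j}$. Thinning by label, the number $\ell_j$ of type-$j$ children of $\rho$ with label at most $x$ is distributed as $\mu^{k\to j}_x$ independently across $j$; and, conditional on a type-$j$ child having label at most $x$, its label is uniform on $[0,x]$, so that by the recursive interpretation its probability of lying in the MIS of its subtree equals $\frac{1}{x}\int_0^x p_j(t)\,dt = y_j(x)/x$. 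Multiplying over independent types and independent active children gives
\[ p_k(x) = \sum_{\ell \in \NN^T} \prod_{j \in T} \mu^{k\to j}_x(\ell_j)\left(1 - \frac{y_j(x)}{x}\right)^{\ell_j}, \]
and the fundamental theorem of calculus then gives $y_k'(x) = p_k(x)$ for a.e.\ $x$, i.e.\ the ODE system \eqref{eq:fundamental}.

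The main obstacle is technical rather than conceptual. When $T$ is infinite, I must verify that the sums and products above converge and define measurable functions of $x$; this relies on the implicit nonexplosive-growth property of $(U,\rho)$, which ensures that $\past{\rho}$ is a.s.\ finite and that each $p_k$ takes values in $[0,1]$, so dominated convergence justifies all interchanges. A secondary issue is uniqueness for \eqref{eq:fundamental} subject to $y_k(0)=0$: for finite $T$ this is classical because the right-hand side is Lipschitz in $(y_j)_{j\in T}$ on the relevant compact range, while for countable $T$ it can be obtained by a Picard-style iteration, with the probabilistic construction providing both existence and identification of the correct solution.
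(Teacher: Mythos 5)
Your argument is essentially the paper's own proof: your $p_k(x)$ is the paper's $\gir^{(k)}_x$ (the conditional probability that the root is occupied given it has type $k$ and label $x$), your $y_k(x)=\int_0^x p_k$ is the paper's $\gir^{(k)}_{<x}$, and the derivation of $p_k(x)$ via independence of the children's subtrees and label-thinning is exactly the paper's key step. The additional remarks you make about convergence for countable $T$ and about uniqueness of the ODE solution are reasonable technical caveats that the paper itself does not address, but they do not change the route.
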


We call \eqref{eq:fundamental} the \defn{fundamental system of ODEs} of the branching process $(U,\rho$).
While this system of ODEs may seem complicated, in many important cases it reduces to a fairly simple system, as we will demonstrate in \cref{sec:applications}.
In particular, the proof of \cref{thm:de} implies that a solution to \eqref{eq:fundamental} exists, and in the presented applications, it will be unique.
In the cases where $(U,\rho)$ is either a single type branching process or a random tree with \iid{} degrees, we provide an easy probability generating function tool that may be used to ``skip'' solving~\eqref{eq:fundamental}.  This is described in \cref{sec:pgf}.
We mention that a somewhat related, but apparently less applicable statement, providing differential equations for the occupancy probability of a given vertex in bounded degree graphs, appears in~\cite{PS05}.

  Observe that
  the proof of \cref{thm:de} actually yields a stronger result.
  Replacing $y_k(1)$ with $y_k(x)$ in the RHS of \eqref{eq:iota}, the obtained quantity is the probability that the root is occupied ``at time $x$'', namely, when vertices whose label is above $x$ are ignored.

\subsubsection{Applications}
To demonstrate the power and applicability of our method, we compute (in \cref{sec:applications}) the greedy independence ratio for several commonly studied (random) locally tree-like graph sequences.
We do so by first reducing the problem to finding the probability that the root of the local limit of the graph sequence ends in the random greedy independent set (using \cref{thm:main}) and then solving its fundamental system of ODEs, as described in \cref{thm:de}.
In a few cases, where the local limit is either a single-type branching process or a random tree with \iid{} degrees, we are assisted by a probability generating functions based ``trick'' that allows us to ``skip'' solving the differential equations (see \cref{sec:pgf}).

In particular, we calculate the asymptotics of the greedy independence ratio for paths and cycles, recovering classical results of Flory~\cite{Flo39} and Page~\cite{Pag59};
for binomial random graphs, reproving a result of McDiarmid~\cite{McD84} for $p=\Theta(1/n)$;
for uniform spanning trees and random functional digraphs (new results);
for sparse random planar graphs (a new result);
and for random regular graphs (and regular graphs with high girth), recovering results of Wormald~\cite{Wor95} (and Lauer and Wormald~\cite{LW07}).

\paragraph{Hypergraphs}
  As mentioned in the introduction, one may run the random greedy algorithm for producing a maximal independent set on a hypergraph.
  Here, an independent set is a set of vertices that does not span a hyperedge.
  The formal definitions for local convergence (see \cref{sec:locallimit}) and \sfpg{} easily generalise to the hypergraph setting, thus \cref{thm:mean,thm:main} are also valid in this setting.
  In fact, it is not hard to generalise the notion of simple multitype branching processes to represent local limits of locally tree-like hypergraphs.
  Hence (an analogue of) \cref{thm:de} can also be applied in this setting.
  In \cref{sec:hyper}, we discuss how to calculate the asymptotic size of the random greedy maximal independent set on locally tree-like hypergraphs.
  We demonstrate the application of our tools by reproving results from a recent\footnote{The work~\cite{NV21} appeared online after a conference version of this paper~\cite{KMMS20} was posted.} paper by Nie and Verstra\"ete~\cite{NV21}.

\subsubsection{Trees}
We conclude our work by analysing the random greedy MIS in trees.
A plausible guess is that among all trees with a given number of vertices, the path, as an ``opposite'' (in some sense) to the star, would minimise the expected size of the obtained greedy MIS.
Our following theorem makes this intuitive statement formal.

\begin{theorem}\label{thm:trees}
  Let $n\ge 1$, let $T$ be a tree on $n$ vertices and let $P_n$ be the path on $n$ vertices.  Then $\egir(P_n)\le\egir(T)$.
\end{theorem}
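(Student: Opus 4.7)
I would proceed by strong induction on $n$; the base cases $n\le 3$ are trivial since the only tree on at most three vertices is $P_n$ itself. The key ingredient for the inductive step is the following identity, valid for any graph $G$ on $n$ vertices: conditioning on the vertex $v^{\star}$ carrying the smallest label — which is uniform over $V(G)$, is always in $\gis(G)$, deterministically blocks all of its neighbors, and after which the relative order of the remaining labels is still uniform — one obtains
\begin{equation*}
  \E\bigl[|\gis(G)|\bigr]
  \;=\; 1 + \frac{1}{n}\sum_{v\in V(G)} \E\bigl[|\gis(F_v)|\bigr],
  \qquad F_v := G \setminus \bigl(\{v\} \cup N_G(v)\bigr).
\end{equation*}
Applying this with $G = P_n$ and noting that each $F_{v_i}$ is a disjoint union of at most two shorter subpaths yields the closed form
\begin{equation*}
  \E\bigl[|\gis(P_n)|\bigr] = 1 + \frac{2}{n}\sum_{k=0}^{n-2} f(k),
  \qquad f(k):=\E\bigl[|\gis(P_k)|\bigr].
\end{equation*}

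For a general tree $T$ on $n$ vertices, each component $C$ of each $F_v$ is a tree on fewer than $n$ vertices, so the induction hypothesis gives $\E[|\gis(C)|]\ge f(|C|)$. A reindexing — using that in a tree the components of $F_v$ are in bijection with pairs $(u,w)$ satisfying $u\in N(v)$ and $w\in N(u)\setminus\{v\}$ (the component being $T_{u\to w}$, the connected component of $T-uw$ containing $w$), and that each directed edge $(u,w)$ is counted exactly $\deg_T(u)-1$ times in this correspondence — reduces the theorem to the purely combinatorial inequality
\begin{equation*}
  \sum_{u\in V(T)}(\deg_T(u)-1)\sum_{w\sim_T u} f\bigl(|T_{u\to w}|\bigr) \;\ge\; 2\sum_{k=0}^{n-2} f(k),
\end{equation*}
which becomes an equality when $T = P_n$.

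I would prove this combinatorial inequality by a second induction, on the number of vertices of $T$ of degree at least $3$. The base case — no such vertex — forces $T = P_n$, where equality holds. Otherwise, pick a branching vertex $u$ of $T$, a neighbor $w_1$ of $u$, and a leaf $\ell$ of another branch $T_{u\to w_2}$; form $T'$ by deleting the edge $uw_1$ and adding the edge $\ell w_1$. This \emph{straightening} strictly reduces either the maximum degree of $T$ or the number of its branching vertices, so iteration terminates at the path $P_n$. The crux is that each straightening step weakly decreases the left-hand side: only the directed-edge contributions at $u$, at $\ell$, and at the vertices on the $u$-to-$\ell$ path are affected, and the induced shifts (namely, $|T_{u_i\to u_{i+1}}|$ grows by $|T_{u\to w_1}|$ along the path, $\deg_T(u)$ drops by one, and $\deg_T(\ell)$ rises from one to two) can be collected into a telescoping local inequality.

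The main obstacle is to verify this decrease claim, since several $f$-terms are shifted simultaneously along the relocation path. I would attempt to express the net change as a telescope in $|T_{u_i\to u_{i+1}}|$, cancel most terms, and close the remaining local inequality using only the monotonicity of $f$ together with the explicit recursion it satisfies (easily extracted from the $P_n$ specialisation above), rather than invoking any delicate analytic property of $f$.
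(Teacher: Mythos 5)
Your reduction is sound and matches the paper's: conditioning on the smallest-label vertex yields the shattering recursion $\egic(T) = 1 + \frac{1}{n}\sum_{v\in V(T)}\egic(T\star v)$, and your directed-edge reindexing with weights $\deg_T(u)-1$ is a correct reformulation of the paper's quantity $\nu(T)=\sum_{v\in V(T)}\sum_{k\in\kappa_v(T)}\alpha_k$. Your target combinatorial inequality is therefore exactly $\nu(T)\ge\nu(P_n)$, which is precisely what the paper reduces to as well.

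The gap is in proving that inequality, which is the hard part. You replace the paper's KC-transformations --- which act along a \emph{bare} path between $x$ and $y$ and move all off-path branches of $y$ to $x$ at once --- with a ``straightening'' that detaches a single branch of a branching vertex $u$ and reattaches it at a leaf $\ell$ of another branch, with no bareness requirement on the $u$-to-$\ell$ path. The promised ``telescoping local inequality'' does not then materialize in general: the per-vertex changes along the path each carry a factor $\deg_T(u_i)-1$, which varies from vertex to vertex whenever an internal path vertex is itself branching, so the shifted $f$-terms do not cancel. The paper sidesteps exactly this by insisting on a bare path (all internal degrees equal $2$), and even in that regime it cannot get away with just monotonicity and the recursion: it needs subadditivity of $\alpha_n$ (\cref{cl:monotone:subadd}) and the two-variable inequality $\xi_{a,\ell}+\xi_{b,\ell}\le\xi_{a+b,\ell}+\xi_{0,\ell}$ (\cref{cl:xi}), whose proofs the authors describe as long and technical and omit from the extended abstract. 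Your plan to close the argument ``using only the monotonicity of $f$ together with the explicit recursion'' is therefore over-optimistic even in the bare-path case; for a general $u$-to-$\ell$ path you would essentially have to decompose the straightening into bare-path moves, reinventing the KC machinery. A minor secondary issue: your termination criterion (``strictly reduces either the maximum degree or the number of branching vertices'') fails when there are two branching vertices of equal maximum degree, but the number of leaves does strictly drop under a straightening, so that part is easily repaired.
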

This theorem gives us an exact (non-asymptotic) explicit lower bound for the expected greedy independence ratio of trees (an asymptotic upper bound of $1$ is trivial, as can be seen by considering the sequence of stars).
The methods used to prove it are very different from those used in the rest of this paper and are more combinatorial.
In particular, we use a transformation on trees, initially introduced by Csikv\'ari in~\cite{Csi10}, which gives rise to a graded poset of all trees of a given order, in which the path is the unique minimum (say).
While we cannot show that this transformation can only increase the expected greedy independence ratio, we show it can only increase some other quantitative property of trees, which allows us to argue that paths indeed achieve the minimum expected greedy independence ratio.

\subsubsection{Comparison with previous work}\label{sec:comp}
The main goal of this paper is to present a unified, comprehensive, easy-to-apply framework for analysing the performance of local algorithms on (random) (hyper)graphs.
We focus on the simplest sort of such an algorithm: the random greedy MIS.
Our framework has two key components.
The first component (\cref{thm:main}) is essentially an application of the objective method~\cite{AS04}, which concerns any locally convergent graph sequence.
The second component (\cref{thm:de}), applicable only for locally tree-like graph sequences, is machinery for computing $\gir(U,\rho)$ (and thus the limit of $\gir(G_n)$ for many graph classes through \cref{thm:main}) by way of a system of differential equations, which in turn can be solved easily in quite a few cases using probability generating functions (\cref{sec:pgf}).
We wish to emphasise that while the objective method is a known tool that has been applied in the study of several parameters of (random) graph sequences
(see, e.g.,~\cites{Ald92,AS16,vdH21+}),
and although random greedy algorithms for producing an MIS have been thoroughly studied in the past
(see, e.g.,~\cites{Flo39,Pag59,McD84,Wor95,DLM16,BB16,BJL17,BJM17,FN18}),
no explicit and applicable connection has been made between the tool and the process.
Thus, while \cref{thm:mean} may be considered folklore, and while \cref{thm:main} (or, more precisely, \cref{cl:var}, which encapsulates the main content of \cref{thm:main}) was proved, in less generality, by Gamarnik and Goldberg~\cite{GG10}, in the present paper, we relate the concepts of random greedy algorithms, local convergence and branching processes to provide an integrated and applicable framework.
This new framework allows us to easily prove well-established results as well as new ones.

Let us dwell upon the comparison between \cref{thm:main} and the aforementioned result of Gamarnik and Goldberg.
As far as we know, Gamarnik and Goldberg were the first to prove bounds on the variance of the density of the random greedy MIS, hinting that the random greedy algorithm for producing an MIS is very robust.
Their setting assumes that the graph sequence has a uniformly bounded degree, and the bound they obtain on the variance is superexponential in the degree.
While we do not attempt to provide explicit bounds on the variance, we show that it is decaying (tending to zero) regardless of the maximum degree.
In fact, our much weaker assumption of subfactorial path growth allows the graph sequence to have a diverging {\em average} degree.
In addition, while Gamarnik and Goldberg apply the objective method, they do it somewhat implicitly and restrict the application to random/high-girth regular graphs.
We put this on a more formal footing and in the largest possible generality through the notion of local convergence.

Analysis of the random greedy MIS on infinite rooted graphs also appears in the literature, often using different terminology (such as \term{blocking RSA};
see, e.g.,~\cites{FP91,PS05}).
Penrose and Sudbury~\cite{PS05} give forward equations which resemble \cref{thm:de}.
Their equations, unlike ours, are not limited to trees;
on the other hand, they are stated and proved for deterministic bounded degree graphs, and, in any case, they appear to be impractical for graphs with cycles.
They later apply the forward equations for $d$-regular trees (\term{Bethe lattices}), reproving known results (see, e.g.,~\cite{FP91}).
We obtain these results as a special case of \cref{thm:de}, and project them to random/high-girth regular graphs.
For analysis of the random greedy MIS on random trees, we refer the reader to the works of Dehling, Fleurke and Kulske~\cite{DFK08} (a result we reprove and slightly generalise in \cref{sec:pgf}), and of Sudbury~\cite{Sud09}.

Our final result, \cref{thm:trees}, concerns an exact (non-asymptotic) analysis of the density of the random greedy MIS on trees.
There are many known nontrivial graph parameters that the path minimises among all trees on the same number of vertices.
For example, Jamison~\cite{Jam81} showed that the expected size of a random subtree of a tree attains its minimum on the path.
Csikv\'ari~\cite{Csi10} and later Bollob\'as and Tyomkyn~\cite{BT12} proved that the path minimises the number of walks of a given length (and thus also its \term{spectral radius}).
For their results, they study a certain transformation on the set of all trees of a given size, called the KC-transformation (see \cref{sec:KC}), and show that it gives rise to a graded poset in which the path is the unique minimum, and the star is the unique maximum.
Their results are obtained then by showing that the parameter in question is monotone with respect to that poset.
In our work, we exploit the same transformation.

\subsection{Organisation of the paper}\label{sec:organisation}
We start with formal definitions and proofs of the main results.
We introduce the metric that is used to define the notion of \emph{local convergence} in \cref{sec:locallimit}, where we also prove \cref{thm:mean}.
In \cref{sec:conc}, we prove \cref{thm:main} by essentially proving a decay of correlation between vertices in terms of their distance and showing that typical pairs of vertices are distant.
In fact, the results of \cref{sec:conc} imply that even without local convergence, under mild growth assumptions, the variance of the greedy independence ratio is decaying.

In \cref{sec:de}, we focus our attention on locally tree-like graph sequences, define (simple, multitype) branching processes, and prove \cref{thm:de}.
We enhance this in \cref{sec:pgf} by introducing a probability generating functions based ``trick'', which allows, in some cases, a significant simplification.

We continue by presenting an extensive list of important applications in~\cref{sec:applications}, where we prove some new results and reprove some known ones, using the machinery of \cref{thm:main,thm:de}.  In a few cases, we are assisted by the claims from \cref{sec:pgf}.
In \cref{sec:hyper}, we demonstrate how the presented tools work, almost as-is, for locally tree-like {\em hypergraphs} (reproving results from~\cite{NV21}). 

In \cref{sec:trees} we focus further on trees, where we prove \cref{thm:trees}.
To this end, we pinpoint several interesting properties of the expected greedy independence ratio of the path.

\section{Local convergence}\label{sec:locallimit}
In order to study asymptotics, it is often useful to construct a suitable
limiting object first.
Local limits were introduced by Benjamini and Schramm~\cite{BS01}
and studied further by Aldous and Steele~\cite{AS04} (A very similar approach has already been introduced by Aldous in~\cite{Ald91}).
Local limits, when they exist, encapsulate the asymptotic data of local behaviour of the convergent graph sequence, and in particular, that of the performance of the greedy algorithm.

We start with basic definitions, which we define for graphs but that can be extended to hypergraphs in an obvious way.
Consider the space $\Gdot$ of rooted locally finite connected graphs viewed up
to root preserving graph isomorphisms.
We provide $\Gdot$ with the metric $\dloc((G_1,\rho_1),(G_2,\rho_2)) = 2^{-R}$,
where $R$ is the largest integer for which
$B_{G_1}(\rho_1,R)\simeq B_{G_2}(\rho_2,R)$.
Here we understand $B_{G}(\rho,R)$ as the \emph{rooted} subgraph of $(G,\rho)$
spanned by the vertices of distance at most $R$ from $\rho$, and $\simeq$ as \emph{rooted-isomorphic}.
It is an easy fact that $(\Gdot,\dloc)$ is a separable complete metric space,
hence it is a Polish space (see~\cite{BS01}).
$(\Gdot,\dloc)$, while being bounded, is not
compact (the sequence of rooted stars $S_n$ does not have a convergent
subsequence).

Recall that a sequence of random elements $\{X_n\}_{n=1}^\infty$
\textbf{converges in distribution} to a random element $X$, if for every
bounded continuous function $f$ we have that $\E[f(X_n)]\to\E[f(X)]$.
Let $G_n$ be a sequence of (random) finite graphs.  We say that $G_n$
\defn{converges locally} to a (random) element $(U,\rho)$ of $\Gdot$
if for every $r\ge 0$,
the sequence $B_{G_n}(\rho_n,r)$ converges in distribution to $B_U(\rho,r)$,
where $\rho_n$ is a uniformly chosen vertex of $G_n$.
Since the inherited topology on all rooted balls in $\Gdot$ with radius $r$
is discrete, this implies convergence in total variation distance.

~

We are now ready to prove \cref{thm:mean}.
\begin{proof}[Proof of \cref{thm:mean}]
  Fix $\eps>0$.
  Let $\sigma$ be a random labelling of (a random sample of) $U$,
  and let $\ell_\sigma$ be the length of the longest decreasing sequence (w.r.t.\ $\sigma$)
  starting from $\rho$.
  Since $(U,\rho)$ has nonexplosive growth, there exists $r_\eps$ for which
  $\pr[\ell_\sigma\ge r_\eps]<\eps$.
  For $n\ge 1$, let $\rho_n$ be a uniformly chosen random vertex of $G_n$,
  and let $\pi$ be a uniform random permutation of its vertices.
  For $r\ge 0$, denote $G_n^r=B_{G_n}(\rho_n,r)$ and $U^r=B_U(\rho,r)$.
  We couple $(G_n,\rho_n,\pi)$
  with $(U^r,\sigma)$ 
  as follows.
  Since $G_n^r$ converges in distribution
  (and hence in total variation distance)
  to $U^r$, there exists $n_r$ such that for all $n\ge n_r$
  we have a coupling between $(G_n,\rho_n)$ and $U^r$ for which $\pr[G_n^r\not\simeq U^r]\le \eps$.
  Assuming $G_n^r\simeq U^r$,
  let $\varphi:G_n^r\to U^r$ be an isomorphism,
  let $\pi^r$ be the permutation on the vertices of $G_n^r$
  which agrees with the ordering of the labels on the vertices of the isomorphic image
  (that is, $\pi^r_u<\pi^r_v \iff \sigma_{\varphi(u)}<\sigma_{\varphi(v)}$).
  Observe that $\pi^r$ has a uniform law.
  Now, since $\pi$ induces a uniform random permutation of $G_n^r$ (by restriction),
  we may couple $\pi$ with $\pi^r$ such that $\pi^r$ is a restriction of $\pi$.
  Note that under this coupling, if it succeeds,
  $\rho_n\in\gis(G_n^r)\iff \rho\in\gis_\sigma(U^r)$
  (here we understand $\gis(G_n^r))$ as induced by $\pi^r$).
  However, on the event
  ``$\ell_\sigma \le r$'', $\rho_n\in\gis(G_n^r)\iff \rho_n\in\gis(G_n)$
  (here we understand $\gis(G_n))$ as induced by $\pi$)
  and
  $\rho\in\gis_\sigma(U^r)\iff \rho\in\gis_\sigma(U[\past{\rho}])$.  Observing that
  $\egir(G_n)=\pr[\rho_n\in\gis(G_n)]$ we obtain that for $r\ge r_\eps$ and $n\ge n_r$,
  $|\egir(G_n)-\gir(U,\rho)|<2\eps$.
\end{proof}

\section{Concentration via exploration--decision algorithms}\label{sec:conc}
With some mild growth assumptions on the graph sequence, without assuming local convergence, we obtain asymptotic concentration of the greedy independence ratio around its mean.
Under these assumptions we show that the dependence between the inclusion of distinct nodes in the maximal independent set decays as a function of their distance, a phenomenon which is sometimes called \emph{correlation decay} or \emph{long-range independence}.
To prove that the model exhibits this phenomenon, we show that with high probability there are no ``long'' monotone paths emerging from a typical vertex, which is the content of the next claim.
We then observe that two independent random vertices are typically distant, and use a general lemma about exploration algorithms to prove decay of correlation.
We remark that similar locality arguments appear in~\cite{NO08}.

\begin{claim}\label{cl:past:rad}
  Suppose that $G_n$ has \sfpg{}.
  Let $\pi$ be a uniform random permutation of the vertices of $G_n$, and let $u$ be a uniformly chosen vertex from $G_n$.
  Then, for every $\eps>0$, there exists $r>0$ such that for every large enough $n$, the probability that there exists a monotone decreasing path of length $r$ (w.r.t.\ $\pi$), emerging from $u$, is at most $\eps$.
\end{claim}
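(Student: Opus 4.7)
The plan is a first-moment computation aligned with the truncation built into the definition of $\mu^*(r)$. Fix $\eps>0$, and for each integer $r\ge 0$ let $X_r=X_r(u,\pi)$ denote the number of monotone decreasing paths of length exactly $r$ that start at $u$ with respect to $\pi$. The event in the claim is precisely $\{X_r\ge 1\}$, so by Markov's inequality applied conditionally on $G_n$ and $u$,
\begin{equation*}
\pr[X_r\ge 1 \mid G_n,u]\le \min\bigl(\E_\pi[X_r \mid G_n,u],\,1\bigr).
\end{equation*}
Since a uniform random permutation orders any given $(r+1)$-element set monotonically decreasing with probability exactly $1/(r+1)!$, we have $\E_\pi[X_r \mid G_n,u]=N_r(u)/(r+1)!$, where $N_r(u)$ is the (deterministic) number of length-$r$ paths in $G_n$ emanating from $u$.

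Next, I would take the full expectation over $u$ and $G_n$, using the identity $\min(a/b,1)=(a\wedge b)/b$ together with the obvious inequality $N_r(u)\le \cN_{G_n}(r;u)$, to obtain
\begin{equation*}
\pr[X_r\ge 1]\le \frac{1}{(r+1)!}\,\E\!\left[\cN_{G_n}(r)\wedge(r+1)!\right].
\end{equation*}
The quantity $\E[\cN_{G_n}(r)\wedge M]$ is nondecreasing in $M$ with supremum $\mu^*(r)$, so for every fixed $M$ we have $\limsup_{n\to\infty}\E[\cN_{G_n}(r)\wedge M]\le \mu^*(r)$. Specialising to $M=(r+1)!$ yields, for all sufficiently large $n$,
\begin{equation*}
\pr[X_r\ge 1]\le \frac{\mu^*(r)+1}{(r+1)!}.
\end{equation*}
The \sfpg{} hypothesis $\mu^*(r)\ll_r r!$ then forces the right-hand side to tend to $0$ as $r\to\infty$, so choosing $r$ large enough (independently of $n$) makes the probability at most $\eps$.

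The only delicate point, rather than a real obstacle, is that one cannot simply first-moment using $\E[\cN_{G_n}(r)]$: for genuinely random $G_n$, the untruncated expected path count need not be controlled by $\mu^*$, and may even be infinite. Matching the Markov normaliser $(r+1)!$ with the truncation cap $M=(r+1)!$ is precisely what lets the \sfpg{} assumption feed directly into the bound and close the argument.
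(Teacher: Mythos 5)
Your proof is correct and rests on the same first-moment-plus-truncation idea as the paper's, but your execution is neater in one respect worth highlighting. The paper bounds $\pr[A_n^r]$ by splitting on whether $\cN_{G_n}(r)\le M$ for a generic truncation level $M$, incurring an extra additive tail term $\pr[\cN_{G_n}(r)>M]$ that must be controlled separately (using finiteness of $\mu^*(r)$). You instead cap the conditional Markov bound at $1$ and use the identity $\min\!\left(N_r(u)/(r+1)!,\,1\right)=\left(N_r(u)\wedge(r+1)!\right)/(r+1)!$, which makes the truncation level $M=(r+1)!$ emerge automatically and eliminates the tail term entirely; the \sfpg{} hypothesis then enters in one step. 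The only cosmetic discrepancy is that the paper states the probability of a fixed length-$r$ path being monotone as $1/r!$ whereas you use the sharper $1/(r+1)!$; both are valid upper bounds and both yield $o_r(1)$ against $\mu^*(r)\ll_r r!$, so nothing is at stake there.
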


\begin{proof}
  Let $\eps\ge 0$.
  Since $\mu^*(r)\ll_r r!$ for every large enough $r$ we have $\mu^*(r)\le \eps r!$.
  We couple $\cN_{G_n}(r)$ and $u$ such that the former counts the number of paths of length at most $r$ emerging from the latter.
  Denote by $A_n^r$ the event that there exists a monotone decreasing path in $G_n$ (w.r.t.\ $\pi$) emerging from $u$ of length $r$.
  Evidently, the probability that a given path of length $r$ is monotone decreasing w.r.t.\ $\pi$ is $1/r!$.
  Since $\mu^*(r)$ is finite, there exists $M\ge 0$ such that $\pr[\cN_{G_n}(r)>M]<\eps$ for every large enough $n$.
  In addition, for large enough $n$ we have
  $\E[\cN_{G_n}(r)\wedge M] \le 2\mu^*(r)$.
  Hence, for large enough $n$,
  \begin{align*}
    \pr[A_n^r]
    &\le \sum_{m=0}^M \pr[A_n^r \mid \cN_{G_n}(r) = m]
           \cdot \pr[\cN_{G_n}(r) = m]
       + \pr[\cN_{G_n}(r) > M]\\
    &\le \frac{1}{r!} \cdot \E[\cN_{G_n}(r)\wedge M] + \eps \le 3\eps.
    \qedhere
  \end{align*}
\end{proof}

\begin{claim}\label{cl:dist}
  Suppose that $G_n$ has \sfpg{}.
  Let $u,v$ be two independently and uniformly chosen vertices from $G_n$.
  Then, for every $\eps,r\ge 0$ we have that for every large enough $n$,
    $\pr[\dist_{G_n}(u,v)\le r] \le \eps$.
\end{claim}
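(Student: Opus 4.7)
The plan is to reduce the claim to a tail bound on the ball size around a uniformly chosen vertex. Write $X_u := |B_{G_n}(u,r)|$ and let $\cN_u(r)$ denote the number of paths of length at most $r$ starting at $u$, so that $\cN_{G_n}(r)$ has the law of $\cN_u(r)$ when $u$ is uniform in $V(G_n)$. Since every vertex of $B_{G_n}(u,r)$ is joined to $u$ by some path of length at most $r$, we have $X_u \le \cN_u(r)+1$. Conditioning on $u$ (and on $G_n$), an independent uniform $v$ lies within distance $r$ of $u$ with probability $X_u/n$, and hence, splitting on whether $X_u$ exceeds a threshold $M$,
\begin{equation*}
  \pr[\dist_{G_n}(u,v) \le r] \;\le\; \pr[X_u > M] + \frac{M}{n}.
\end{equation*}
It thus suffices to choose $M$, and then $n$, making both terms small.

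This is where the \sfpg{} hypothesis enters. By the definition of $\mu^*$, for every fixed $M' > 0$ we have $\limsup_n \E[\cN_{G_n}(r) \wedge M'] \le \mu^*(r) < \infty$, so there is $n_0 = n_0(M',r)$ with $\E[\cN_{G_n}(r) \wedge M'] \le \mu^*(r)+1$ whenever $n \ge n_0$. The key observation is that the events $\{\cN_u(r) \ge M\}$ and $\{\cN_u(r) \wedge 2M \ge M\}$ coincide, so Markov's inequality applied to the truncated variable (with $M' := 2M$) gives
\begin{equation*}
  \pr[\cN_u(r) \ge M] \;=\; \pr[\cN_u(r) \wedge 2M \ge M] \;\le\; \frac{\E[\cN_{G_n}(r) \wedge 2M]}{M} \;\le\; \frac{\mu^*(r)+1}{M}
\end{equation*}
for every $n \ge n_0$. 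Choosing $M$ large enough that $(\mu^*(r)+1)/M \le \eps/2$, and noting that $X_u \le \cN_u(r)+1$ so $\{X_u > M\} \subseteq \{\cN_u(r) \ge M\}$, the first term in the earlier display is at most $\eps/2$; then taking $n$ large enough that $M/n \le \eps/2$ completes the proof.

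The only subtlety is that \sfpg{} controls the \emph{truncated} expectation $\E[\cN_{G_n}(r) \wedge M']$, not $\E[\cN_{G_n}(r)]$ itself, which can be dominated by a few exceptional vertices having enormous neighbourhoods. The truncation trick above extracts a tail bound on $\cN_u(r)$ directly from the truncated expectation, which is exactly what is needed, since only the ball size around a \emph{typical} vertex matters for the claim.
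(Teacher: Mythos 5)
Your proof is correct and takes essentially the same route as the paper's: both reduce the claim to the observation that, conditioned on $G_n$ and $u$, the probability that $v$ lands in $B_{G_n}(u,r)$ is $|B_{G_n}(u,r)|/n$, dominate the ball size by the path count $\cN_{G_n}(r)$, and split on whether the path count exceeds a threshold $M$. The one place you go beyond the paper is in making explicit how finiteness of $\mu^*(r)$ yields a tail bound $\pr[\cN_{G_n}(r)>M]<\eps$: the paper simply asserts this as a consequence of $\mu^*(r)<\infty$, while you derive it cleanly via the truncation-plus-Markov step $\pr[\cN_u(r)\ge M]=\pr[\cN_u(r)\wedge 2M\ge M]\le \E[\cN_{G_n}(r)\wedge 2M]/M$, which fills in a detail the paper glosses over; the only cosmetic caveat is that you should take $M$ integral so that $X_u\le\cN_u(r)+1$ and $X_u>M$ jointly imply $\cN_u(r)\ge M$.
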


\begin{proof}
  Let $\eps,r\ge 0$.
  We couple $\cN_{G_n}(r)$ and $u$ such that the former counts the number of paths of length at most $r$ emerging from the latter.
  Note that under this coupling, $|B_{G_n}(u,r)|\le\cN_{G_n}(r)$.
  Since $\mu^*(r)$ is finite, there exists $M\ge 0$ such that $\pr[\cN_{G_n}(r)>M]<\eps$ for every large enough $n$.
  Hence, for large enough $n$,
  \begin{align*}
    &\pr[\dist_{G_n}(u,v)\le r]
    = \pr[v\in B_{G_n}(u,r)]\\
    &\le \pr[v\in B_{G_n}(u,r) \mid \cN_{G_n}(r) \le M]
       + \pr[\cN_{G_n}(r) > M]
     \le \frac{M}{n}+\eps \le 2\eps.\qedhere
  \end{align*}
\end{proof}

Let $G=(V,E)$ be a graph.  An \defn{exploration--decision rule} for $G$ is a (deterministic) function $\cQ$, whose input is a pair $(S,g)$, where $S$ is a non-empty sequence of distinct vertices of $V$, and $g:S\to[0,1]$, and whose output is either a vertex $v\in V\sm S$ or a ``decision'' $\T$ or $\F$.
An \defn{exploration--decision algorithm} for $G$, with rule $\cQ$, is a (deterministic) algorithm $\sA$, whose input is an initial vertex $v\in V$ and a function $f:V\to[0,1]$, which outputs $\T$ or $\F$, and operates as follows.
Set $u_1=v$.
Suppose $\sA$ has already set $u_1,\ldots,u_i$.
Let $x=\cQ((u_1,\ldots,u_i),f\restriction_{\{u_1,\ldots,u_i\}})$.
If $x\in V$, set $u_{i+1}=x$ and continue.
Otherwise stop and return $x$.
We call the set $u_1,\ldots,u_i$ at this stage the \defn{range} of the algorithm's run.
We denote the output of the algorithm by $\sA(v,f)$ and its range by
$\rng_{\sA}(v,f)$.
The radius of the algorithm's run, denoted $\rad_{\sA}(v,f)$, is the maximum distance between $v$ and an element of its range.

\begin{lemma}\label{lem:corr}
  Let $\eps>0$.
  Let $G=(V,E)$ be a graph, let $\sigma$ be a random labelling of its vertices, let $\sA$ be an exploration--decision algorithm for $G$ and let $r\ge 1$.
  Let $u,v$ be sampled independently (and independently of $\sigma$)
  from some distribution over $V$.
  Suppose that w.p.\ at least $1-\eps$ both $\dist_G(u,v)\ge 3r$, and $\rad_{\sA}(u,\sigma),\rad_{\sA}(v,\sigma)\le r$.
  Then $|\cov[\sA(u,\sigma),\sA(v,\sigma)]|=O(\eps)$.
\end{lemma}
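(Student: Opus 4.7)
The plan is to truncate the algorithm so that it ``lives'' inside the $r$-balls around $u$ and $v$, and then exploit that on the good event these balls are disjoint, so that disjoint (hence independent) coordinates of the labelling $\sigma$ are read. Identify $\sA$'s output with $\{0,1\}$ by setting $X=\mathbbm{1}[\sA(u,\sigma)=\T]$ and $Y=\mathbbm{1}[\sA(v,\sigma)=\T]$, and let $\cE$ denote the good event in the hypothesis, so $\pr[\cE^c]\le\varepsilon$. Introduce the truncated random variables $\tilde X = X\cdot\mathbbm{1}[\rad_\sA(u,\sigma)\le r]$ and $\tilde Y = Y\cdot\mathbbm{1}[\rad_\sA(v,\sigma)\le r]$.

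The main structural observation, and the technical heart of the argument, is that for each vertex $u_0$ there is a deterministic function $f_{u_0}\colon [0,1]^{B_G(u_0,r)}\to\{0,1\}$ with $\tilde X = f_u(\sigma\restriction_{B_G(u,r)})$: simulate the exploration-decision rule $\cQ$ step by step starting from $u_0$, reading off labels as needed, and at each step either continue (if $\cQ$ proposes a vertex still inside $B_G(u_0,r)$), output $0$ (if $\cQ$ proposes a vertex outside the ball, which certifies $\rad_\sA(u_0,\sigma)>r$), or halt and report the algorithm's genuine answer (on which the simulation terminated within the ball, so $\rad_\sA(u_0,\sigma)\le r$ and the truncation coincides with the untruncated output). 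The analogous statement holds for $\tilde Y$.

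On the event $\{\dist_G(u,v)\ge 3r\}$ the balls $B_G(u,r)$ and $B_G(v,r)$ are disjoint, so $\tilde X$ and $\tilde Y$ are functions of disjoint coordinates of the i.i.d.\ labelling and are therefore conditionally independent given $u$ and $v$. Setting $h_X(u_0)=\E[\tilde X\mid u=u_0]$ and $h_Y(v_0)=\E[\tilde Y\mid v=v_0]$ and using that $u$ and $v$ are independent (so $\E[h_X(u)h_Y(v)]=\E[\tilde X]\E[\tilde Y]$), this gives
\begin{equation*}
  \E[\tilde X\tilde Y\mathbbm{1}_{\dist(u,v)\ge 3r}]
  = \E\bigl[h_X(u)h_Y(v)\mathbbm{1}_{\dist(u,v)\ge 3r}\bigr]
  = \E[\tilde X]\E[\tilde Y] + O(\varepsilon),
\end{equation*}
since $h_X,h_Y\in[0,1]$ and $\pr[\dist(u,v)<3r]\le\pr[\cE^c]\le\varepsilon$.

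Finally I would assemble three $O(\varepsilon)$ error terms: stripping the indicator $\mathbbm{1}_{\dist(u,v)\ge 3r}$ on the left side above costs at most $\varepsilon$; and $|\E[XY]-\E[\tilde X\tilde Y]|$ and $|\E[X]\E[Y]-\E[\tilde X]\E[\tilde Y]|$ are each $O(\varepsilon)$, since $X,Y,\tilde X,\tilde Y\in[0,1]$ and the truncations disagree with $X,Y$ only on the event $\{\rad_\sA(u,\sigma)>r\}\cup\{\rad_\sA(v,\sigma)>r\}\subseteq\cE^c$. Combining these yields $|\cov(X,Y)|=O(\varepsilon)=o_\varepsilon(1)$. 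The main obstacle is the locality/measurability claim in paragraph two — once the truncated output is certified to be a function of only the labels in the $r$-ball, the rest reduces to routine law-of-total-expectation bookkeeping of small-probability error terms.
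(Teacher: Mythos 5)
Your proof is correct and takes essentially the same approach as the paper: the paper packages your truncation as an ``$r$-truncated'' algorithm $\sA^r$ whose rule returns $\F$ whenever $\cQ$ would propose a vertex outside the $r$-ball, so that $\mathbbm{1}[\sA^r(w,\sigma)=\T]$ is exactly your $\tilde X$ (resp.\ $\tilde Y$), and your simulation argument is precisely why $\sA^r$ is a local function of $\sigma\restriction_{B_G(w,r)}$. The remaining bookkeeping --- disjointness of the $r$-balls on $\{\dist_G(u,v)\ge 3r\}$ giving conditional independence, plus $O(\varepsilon)$ error terms for the truncation and the bad event --- matches the paper's chain of equalities.
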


\begin{proof}
  Let $\cQ$ be the rule of the algorithm $\sA$.
  The \defn{$r$-truncated} version of $\cQ$, denoted $\cQ^r$, is defined as follows.
  To determine $\cQ^r((u_1,\ldots,u_i),g)$, $\cQ^r$ checks the value $x=\cQ((u_1,\ldots,u_i),g)$.
  If $x\in\{\T,\F\}$ or $\dist_G(u_1,x)\le r$, $\cQ$ returns $x$.
  Otherwise, it returns $\F$.
  The \defn{$r$-truncated} version of the algorithm $\sA$, denoted $\sA^r$,
  is the exploration--decision algorithm with rule $\cQ^r$.
  Note that for every $v$ and $f$, $\rad_{\sA^r}(v,f)\le r$.

  For a vertex $w\in \{u,v\}$, let $X_w$ be the event ``$\sA(w,\sigma)=\T$'', let $Y_w$ be the event ``$\sA^r(w,\sigma)=\T$'', and let $r_w=\rad_{\sA}(w,\sigma)$.
  Note that $\pr[X_w\land r_w\le r]=\pr[Y_w\land r_w\le r]=\pr[Y_w]$, thus
  $\pr[X_w]=\pr[Y_w]+O(\eps)$.  Since for $x,y$ satisfying
  $\dist_G(x,y)\ge 3r$ we have that $Y_x,Y_y$ are independent, it follows
  that $\pr[Y_u\land Y_v]=\pr[Y_u]\pr[Y_v]+O(\eps)$.
  \begin{align*}
  \pr[X_u\land X_v]
  &= \pr[X_u\land X_v\land (\max\{r_u,r_v\}\le r)]
  +\pr[X_u\land X_v\land (\max\{r_u,r_v\}> r)]\\
  &= \pr[Y_u\land Y_v\land (\max\{r_u,r_v\}\le r)] + O(\eps)\\
  &= \pr[Y_u\land Y_v] + O(\eps)
  = \pr[Y_u]\pr[Y_v] + O(\eps) = \pr[X_u]\pr[X_v]+O(\eps).
  \qedhere
  \end{align*}
\end{proof}

We now apply the lemma in our setting.

\begin{claim}\label{cl:cov}
  Suppose that $G_n$ has \sfpg{}.
  Let $u,v$ be two independently and uniformly chosen vertices from $G_n$.
  Denote by $R_u,R_v$ the events that $u\in\gis(G_n)$, $v\in\gis(G_n)$, respectively.  Then $|\cov[R_u,R_v]|=o(1)$.
\end{claim}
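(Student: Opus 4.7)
The plan is to reduce the claim directly to \cref{lem:corr} by identifying the natural exploration--decision algorithm whose output decides membership in the greedy MIS, and whose range is exactly the past of the starting vertex. Once this identification is made, \cref{cl:past:rad} bounds the algorithm's radius and \cref{cl:dist} provides the required spatial separation between $u$ and $v$, and then \cref{lem:corr} finishes the job.

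More precisely, I would first define an exploration--decision algorithm $\sA$ as follows. Starting from a vertex $w$, iteratively reveal the neighbours of already-explored vertices whose label is smaller than the label of the vertex through which they were reached, recursing on each such neighbour; once no more such vertices exist, compute from the revealed labelled induced subgraph whether $w$ belongs to the greedy MIS, and return $\T$ or $\F$ accordingly. By the very definition of the greedy MIS as a function of $\sigma$, one has $\sA(w,\sigma)=\mathbf{1}[w\in\gis(G_n)]$ whenever the recursion terminates. Moreover, the range of $\sA(w,\sigma)$ is contained in $\past{w}$, so its radius is at most the length of the longest monotone decreasing path from $w$ with respect to $\sigma$. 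In particular, $\sA(u,\sigma)=\mathbf{1}_{R_u}$ and $\sA(v,\sigma)=\mathbf{1}_{R_v}$ almost surely.

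Given $\eps>0$, I would then choose $r$ via \cref{cl:past:rad} so that, for all sufficiently large $n$ and for a uniformly chosen vertex $w\in V(G_n)$, the probability that there is a monotone decreasing path of length $r$ from $w$ is at most $\eps/3$. In particular, $\pr[\rad_{\sA}(u,\sigma)>r]\le\eps/3$ and likewise for $v$. Next, applying \cref{cl:dist} with the fixed value $3r$, for all sufficiently large $n$ we have $\pr[\dist_{G_n}(u,v)\le 3r]\le\eps/3$. A union bound yields that with probability at least $1-\eps$ both $\rad_{\sA}(u,\sigma),\rad_{\sA}(v,\sigma)\le r$ and $\dist_{G_n}(u,v)\ge 3r$, which is precisely the hypothesis of \cref{lem:corr}. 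That lemma therefore gives $|\cov[\sA(u,\sigma),\sA(v,\sigma)]|=o_\eps(1)$, i.e., $|\cov[R_u,R_v]|=o_\eps(1)$. Since $\eps>0$ was arbitrary, we conclude $|\cov[R_u,R_v]|=o(1)$.

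There is essentially no further obstacle here: all the probabilistic work has been done in the preceding claims, and the only thing that really needs to be verified is that the algorithm described above is genuinely of the form required by \cref{lem:corr}. This is a small bookkeeping check — one must ensure that at each step the next vertex queried is a deterministic function of the previously-revealed partial labelling (which it is, for any fixed tie-breaking rule on the already-seen vertices), and that the algorithm terminates on every input for which the past of the starting vertex is finite, which is the only regime in which the events $R_u,R_v$ are well defined. So the main conceptual point is simply to notice that the exploration of $\past{u}$ fits the abstract framework of \cref{lem:corr}.
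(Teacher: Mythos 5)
Your proposal is correct and follows the same route as the paper: define an exploration--decision algorithm that reveals the past of the starting vertex and decides MIS membership from it, bound its radius via \cref{cl:past:rad}, separate $u$ and $v$ via \cref{cl:dist}, and invoke \cref{lem:corr}. One small imprecision: the range of the algorithm cannot be contained in $\past{w}$ --- the rule only sees labels of already-revealed vertices, so it must query all neighbours of ends of revealed monotone paths, including those that turn out to have larger labels --- so the range is $\past{w}$ together with its external boundary, and the radius is (at most) one more than the longest monotone decreasing path from $w$; this off-by-one is harmless and is handled the same way in the paper.
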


\begin{proof}
  Let $\eps>0$.
  We describe an exploration--decision algorithm $\sA$ by defining its rule.
  Given a vertex sequence $S=(u_1,\ldots,u_i)$ and labels $g:S\to[0,1]$, the rule checks for monotone decreasing sequences emerging from $u_1$, in $S$, with respect to $g$.  Denote by $\cE$ the set of ends of these sequences.  If there are vertices in $V\sm S$ with neighbours in $\mathcal{E}$, return an arbitrary vertex among these.  Otherwise, perform the Greedy MIS algorithm on the past of $u_1$ inside $S$, and return $\T$ if $u_1$ ends up in the MIS, or $\F$ otherwise.
  We observe that if $\sigma$ is a random labelling of $G_n$ then for $w\in\{u,v\}$ the event $\sA(w,\sigma)=\T$ is in fact the event $R_w$.
  We also note that if the longest monotone decreasing sequence, w.r.t.\ $\sigma$, emerging from $w$ is of length $r-1$, then $\rad_{\sA}(w,\sigma) \le r$.

  By \cref{cl:past:rad} there exists $r>0$ such that for every large enough $n$ the probability that there exists a monotone decreasing path of length $r-1$ from either $u$ or $v$ is at most $\eps$.
  By \cref{cl:dist}, for large enough $n$, the probability that the distance between $u$ and $v$ is at most $3r$ is at most $\eps$.
  Therefore, by \cref{lem:corr}, $|\cov[\sA(u,\sigma),\sA(v,\sigma)]|=o(1)$.
\end{proof}

\begin{claim}\label{cl:var}
  Suppose that $G_n$ has \sfpg{}.
  Then $\Var[\gir(G_n)]=o(1)$.
\end{claim}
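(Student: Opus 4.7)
The plan is to identify $\Var[\gir(G_n)]$ with the covariance $\cov[R_u,R_v]$ controlled in \cref{cl:cov}, and then invoke that claim.

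Write $n = |V(G_n)|$, let $\sigma$ denote the random labelling of $G_n$, and set $X_w = \mathbbm{1}[w \in \gis(G_n)]$, so that $\gir(G_n) = n^{-1}\sum_w X_w$. Expanding the variance as a double sum of pairwise covariances yields
$$\Var[\gir(G_n)] = \frac{1}{n^2}\sum_{w,w' \in V(G_n)} \cov[X_w, X_{w'}].$$

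On the other hand, let $u,v$ be iid uniform on $V(G_n)$, drawn independently of $\sigma$. Conditioning on the pair $(u,v)$ gives $\E[R_u R_v] = n^{-2}\sum_{w,w'}\pr[w,w' \in \gis(G_n)]$, while the independence of $u$ and $v$ gives $\E[R_u]\E[R_v] = n^{-2}\sum_{w,w'}\pr[w \in \gis(G_n)]\pr[w' \in \gis(G_n)]$. Subtracting, one sees that the right-hand side of the identity above is exactly $\cov[R_u,R_v]$, so $\Var[\gir(G_n)] = \cov[R_u,R_v]$, and \cref{cl:cov} delivers the conclusion.

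I do not foresee any real obstacle here: all of the substantive work — the decay-of-correlation given by \cref{lem:corr} together with the growth bounds \cref{cl:past:rad,cl:dist} — has already been carried out in \cref{cl:cov}, and the remaining step is just the elementary rewriting of a sample variance as an expected pairwise covariance under independent uniform indexing.
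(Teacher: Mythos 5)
Your proposal is correct and follows essentially the same route as the paper: you identify $\Var[\gir(G_n)]$ with $\cov[R_u,R_v]$ for iid uniform $u,v$ and then invoke \cref{cl:cov}. The paper phrases this reduction via the law of total covariance (noting $\E[\cov[R_u,R_v\mid u,v]]=\Var[\gir(G_n)]$ and that $\cov[\E[R_u\mid u],\E[R_v\mid v]]=0$ by independence), whereas you expand the double sum of pairwise covariances directly, but these are the same computation.
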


\begin{proof}
  For a vertex $w$, denote by $R_w$ the event that $w\in\gis(G_n)$.
  Let $u,v$ be two independently and uniformly chosen vertices from $G_n$.
  Since the random variables $\E[R_u\mid u]$ and $\E[R_v\mid v]$ are independent
  (since they are measurable with respect to $u$ and $v$, respectively, which are independent),
  by \cref{cl:cov}, and by the law of total covariance,
  \begin{align*}
    \Var[\gir(G_n)]
    &= \frac{1}{n^2}\sum_{x,y\in V(G_n)} \cov[R_x,R_y]
     = \E[\cov[R_u,R_v\mid u,v]]\\
    &= \cov[R_u,R_v] - \cov[\E[R_u\mid u,v],\E[R_v\mid u,v]]\\
    &= \cov[R_u,R_v] - \cov[\E[R_u\mid u],\E[R_v\mid v]]
    = \cov[R_u,R_v] = o(1).\qedhere
  \end{align*}
\end{proof}

\begin{proof}[Proof of \cref{thm:main}]
  Let $\eps>0$.
  First, we note that since $G_n$ has \sfpg{}, $(U,\rho)$ has nonexplosive growth.
  Indeed, the number of paths of length $r$ in $U$ emerging from $\rho$ is subfactorial in $r$, hence the probability of having a monotone path of length $r$ emerging from the root decays to $0$ as $r$ grows.
  Thus, by \cref{thm:mean}, there exists $n_0$ such that for every $n\ge n_0$, $|\egir(G_n)-\gir(U,\rho)|\le\eps$.
  Thus, by Chebyshev's inequality and \cref{cl:var},
  \begin{equation*}
    \pr[|\gir(G_n)-\gir(U,\rho)| > 2\eps]
    \le \pr[|\gir(G_n)-\egir(G_n)| > \eps]
    \le \eps^{-2}\Var[\gir(G_n)] = o(1).\qedhere
  \end{equation*}
\end{proof}

\section{Branching processes and differential equations}\label{sec:de}
As promised, we give a formal definition of multitype branching processes.
Let $T$ be a finite or countable set, which we call the \defn{type set}.
Let $\dot\mu$ be a distribution on $T$, which we call the \defn{root distribution}, and for each $k\in T$, let $(\mu^{k\to j})_{j\in T}$ be an \defn{offspring distribution}, which is a distribution on vectors with nonnegative integer coordinates.
Let $\tau\sim\dot\mu$, and for every finite sequence of natural numbers $\vect{v}$ let $(\xi^{k\to j}_{\vect{v}})_{j\in T}\sim(\mu^{k\to j})_{j\in T}$ be a random vector, where these random vectors are independent for different indices $\vect{v}$ and are independent of $\tau$.
A \defn{multitype branching process} $(\vect{Z}_t)_{t\in\NN}$ with type set $T$, root distribution $\dot\mu$ and offspring distributions $(\mu^{k\to j})_{j\in T}$ is a Markov process on labelled trees, in which each vertex is assigned a type in $T$, which may be described as follows.
At time $t=0$, the tree $\vect{Z}_0$ consists of a single vertex of type $\tau$, labelled by the empty sequence.
At time $t+1$, the tree $\vect{Z}_{t+1}$ is obtained from $\vect{Z}_t$ as follows.
For each $k\in T$ and $\vect{v}$ of length $t$ and type $k$ in $\vect{Z}_t$, we add the vertices $\vect{v}\concat i$ for all $0\le i < \smash{\sum_{j\in T}\xi^{k\to j}_{\vect{v}}}$, having exactly $\xi^{k\to j}_{\vect{v}}$ of them being assigned type $j$, uniformly at random, and connecting them with edges to $\vect{v}$.\footnote{By $\vect{v}\concat i$ we mean the sequence obtained from $\vect{v}$ by appending the element $i$.}
If in addition $(\mu^{k\to j})_{j\in T}$ is a product measure, namely, if $\xi^{k\to j}_{\vect{v}}\sim\mu^{k\to j}$ are sampled independently for distinct $j\in T$, the process is called \defn{simple}.
We often think of a multitype branching process as the possibly infinite (random) rooted graph $\vect{Z}_\infty=\bigcup_{t\ge 0}\vect{Z}_t$, rooted at the single vertex of $\vect{Z}_0$.

\begin{proof}[Proof of \cref{thm:de}]
  Let $\sigma$ be a random labelling of $U$.
  To ease notation, set $\gir=\gir(U,\rho)$ and $\gis=\gis(U[\past{\rho}])$, and recall that $\gir=\pr[\rho\in\gis]$.
  Let $\tau\sim\dot\mu$ be the type of the root.
  For $k\in T$ and $x\in[0,1]$, define
  $\gir^{(k)} = \pr[\rho\in \gis\mid \tau=k]$ and $\gir^{(k)}_x = \pr[\rho\in \gis\mid \sigma_\rho=x,\ \tau=k]$.
  Note that this is well defined, even if the event that $\sigma_\rho=x$ has probability $0$.
  Let further
  \begin{equation*}
    \gir^{(k)}_{<x} = \int_0^x \gir^{(k)}_zdz,
  \end{equation*}
  so $\gir^{(k)} = \gir^{(k)}_{<1}$, hence
  \begin{equation*}
    \gir = \sum_{k\in T} \gir_{<1}^{(k)} \cdot \pr[\tau=k].
  \end{equation*}
  It, therefore, suffices to show that the family $y_k(x):=\gir^{(k)}_{<x}$ satisfies \eqref{eq:fundamental} (obviously, it satisfies the boundary conditions).
  The critical observation is that conditioning on the label of the root, distinct children in its past are roots to independent randomly labelled subtrees.
  In particular, conditioning on $\sigma_\rho$ and on the event that $v_1,\ldots,v_a$ are the children of $\rho$ in its past, the events ``$v_i\in\gis$'' for $i=1,\ldots,a$ are mutually independent.
  Since $\rho\in\gis$ if and only if $v_i\notin\gis$ for every $i=1,\ldots,a$,
  \begin{align*}
    y_k'(x)
    = (\gir^{(k)}_{<x})'
    = \gir^{(k)}_x
    &= \sum_{\ell \in \NN^T}
       \prod_{j\in T} \mu^{k\to j}_x(\ell_j)
       \left(1-\pr[\rho\in\gis\mid \sigma_\rho<x,\ \tau=j]\right)^{\ell_j}\\
    &= \sum_{\ell \in \NN^T}
       \prod_{j\in T} \mu^{k\to j}_x(\ell_j)
       \left(1-\frac{y_j(x)}{x}\right)^{\ell_j}.\qedhere
  \end{align*}
\end{proof}

\section{Probability generating functions}\label{sec:pgf}
In this section we demonstrate how generating functions may aid solving the fundamental system of ODEs~\eqref{eq:fundamental} (and thus finding $\gir$) for certain simple branching processes.
In the following sections, we will use the notation $y_k(x)$ as in \eqref{eq:fundamental}, and omit the subscript $k$ when the branching process has a single type.

\paragraph*{Single Type Branching Processes}
For a probability distribution $\vect{p}=(p_d)_{d=0}^\infty$, let $\ary{\vect{p}}$ be the $\vect{p}$-ary tree, namely, it is a (single type) branching process, for which the offspring distribution is $\vect{p}$.
The fundamental ODE in this case is
\begin{equation}\label{eq:p:ary}
  y'(x) =
  \sum_{d=0}^{\infty}p_d
  \sum_{\ell=0}^d
  \binom{d}{\ell}
  (1-x)^{d-\ell}x^\ell\left(1-\frac{y(x)}{x}\right)^\ell
  = \sum_{d=0}^{\infty}p_d \left(1-y(x)\right)^d.
\end{equation}
This differential equation may not be solvable, but in many important cases it is, and we will use it.
Denote by $g_\vect{p}(z)$ the probability generating function (\pgf{}) of
$\vect{p}$, that is,
\begin{equation}\label{eq:pgf}
  g_\vect{p}(z) = \sum_{d=0}^\infty p_d z^d.
\end{equation}
Let $\mathsf{h}_\vect{p}(x)$ be the solution to the equation
\begin{equation}\label{eq:h:ary}
  \int_{\mathsf{h}_{\vect{p}}(x)}^1 \frac{dz}{g_\vect{p}(z)} = x.
\end{equation}

\begin{claim}\label{cl:pgf:ary}
  $y(x) = 1 - \mathsf{h}_\vect{p}(x)$.
\end{claim}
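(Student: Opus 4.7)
The plan is to prove the claim by directly verifying that $y(x) = 1 - \mathsf{h}_{\vect{p}}(x)$ solves the ODE~\eqref{eq:p:ary} derived for single-type branching processes, using separation of variables.

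First, I would rewrite the fundamental ODE in a compact form: combining \eqref{eq:p:ary} with the definition of the probability generating function \eqref{eq:pgf}, the equation reads
\begin{equation*}
  y'(x) = g_{\vect{p}}(1-y(x)), \qquad y(0)=0.
\end{equation*}
This is separable. Assuming $g_{\vect{p}}$ is nonzero on the relevant range (which holds since $g_{\vect{p}}(1)=1$ and $g_{\vect{p}}$ is continuous and monotone on $[0,1]$), I would perform the substitution $z = 1-y(x)$, so that $dz = -y'(x)\,dx = -g_{\vect{p}}(z)\,dx$, and hence $dx = -dz/g_{\vect{p}}(z)$.

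Integrating from $0$ to $x$ and using the initial condition $y(0)=0$, which gives $z=1$ at the lower endpoint and $z=1-y(x)$ at the upper endpoint, I obtain
\begin{equation*}
  x = -\int_{1}^{1-y(x)} \frac{dz}{g_{\vect{p}}(z)} = \int_{1-y(x)}^{1} \frac{dz}{g_{\vect{p}}(z)}.
\end{equation*}
Comparing with the defining equation \eqref{eq:h:ary} for $\mathsf{h}_{\vect{p}}$, this says precisely that $1-y(x) = \mathsf{h}_{\vect{p}}(x)$, proving the claim.

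The main subtlety — rather than any genuine obstacle — is ensuring that the separation of variables is legitimate, i.e.\ that $g_{\vect{p}}(1-y(x))$ stays strictly positive along the solution. This follows because $y(0)=0$ gives $g_{\vect{p}}(1)=1>0$ and $y$ is nondecreasing (as $y'\ge 0$), so $1-y(x)$ decreases from $1$ and $g_{\vect{p}}(1-y(x))>0$ as long as $1-y(x)>0$; if $p_0=0$ one also needs a brief argument that the solution never reaches $y=1$ on $[0,1]$, which follows from the fact that $\int_0^1 dz/g_{\vect{p}}(z)$ controls the time needed to approach that boundary. Once this monotonicity and positivity are in place, the computation above is immediate, and uniqueness of the solution to \eqref{eq:p:ary} (standard from Picard--Lindelöf on compact subintervals) guarantees that $1-\mathsf{h}_{\vect{p}}(x)$ is the solution.
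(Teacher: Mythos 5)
Your proof is correct and follows essentially the same route as the paper: both integrate the separable ODE $y'(x)=g_{\vect{p}}(1-y(x))$, $y(0)=0$, via the substitution $z=1-y(x)$ and identify the resulting implicit relation $\int_{1-y(x)}^1 dz/g_{\vect{p}}(z)=x$ with the defining equation for $\mathsf{h}_{\vect{p}}$. The paper packages the substitution slightly differently (defining $\varphi(u)=1-y(u)$ and showing directly that $x=\beta:=y^{-1}(1-\mathsf{h})$) and does not bother spelling out the positivity/uniqueness caveats you add, but the computation is the same.
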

\begin{proof}
  Fix $x\in[0,1]$, let $\mathsf{h}=\mathsf{h}_\vect{p}(x)$ and
  $g(z)=g_\vect{p}(z)$.
  Define $\varphi:[0,\beta]\to[\mathsf{h},1]$, where
  $\beta=y^{-1}(1-\mathsf{h})$, as follows:
  $\varphi(u)=1-y(u)$.  Note that by \eqref{eq:p:ary},
  \begin{equation*}
    \varphi'(u) = -y'(u) = -g(\varphi(u)).
  \end{equation*}
  Thus
  \begin{equation*}
    x = \int_{\mathsf{h}}^1 \frac{dz}{g(z)}
    = -\int_{\varphi(0)}^{\varphi(\beta)} \frac{dz}{g(z)}
    = -\int_0^\beta \frac{\varphi'(z) dz}{g(\varphi(z))} = \beta,
  \end{equation*}
  hence $y(x) = 1-\mathsf{h}$.
\end{proof}

In particular, it follows from \cref{cl:pgf:ary} that
$\gir(\ary{\vect{p}}) = 1 - \mathsf{h}_\vect{p}(1)$.

\paragraph*{Random Trees With IID Degrees}
For a probability distribution $\vect{p}=(p_d)_{d=1}^\infty$, let
$\reg{\vect{p}}$ be the $\vect{p}$-tree, namely, it is a random tree in which
the degrees of the vertices are independent random variables with distribution
$p$.  We may view it as a two-type branching process, with type $0$ for the
root and $1$ for the rest of the vertices.  Let $g_\vect{p}(z)$ be the
\pgf{} of $\vect{p}$ (see~\eqref{eq:pgf}, and note that $p_0=0$).
The fundamental system of ODEs in this case is
\begin{equation}\label{eq:p:iid}
  y_0'(x) =
\sum_{d=1}^\infty p_d
\sum_{\ell=0}^d
\binom{d}{\ell}
(1-x)^{d-\ell}x^\ell\left(1-\frac{y_1(x)}{x}\right)^\ell
= \sum_{d=1}^\infty p_d \left(1-y_1(x)\right)^d
= g_\vect{p}(1-y_1(x)),
\end{equation}
and by \eqref{eq:p:ary},
\begin{equation}\label{eq:p:iid:ary}
  y_1'(x) = \sum_{d=0}^\infty p_{d+1}(1-y_1(x))^d
  = \frac{1}{1-y_1(x)}\sum_{d=1}^\infty p_d(1-y_1(x))^d
  = \frac{g_\vect{p}(1-y_1(x))}{1-y_1(x)}.
\end{equation}
Let $\mathfrak{h}_\vect{p}(x)$ be the solution to the equation
\begin{equation*}
\int_{\mathfrak{h}_{\vect{p}}(x)}^1 \frac{zdz}{g_\vect{p}(z)} = x.
\end{equation*}

The next claim is \cite{DFK08}*{Theorem 1}.\footnote{In \cite{DFK08} the authors required that the the degrees of the tree are all at least $2$; we do not require this here.}
\begin{claim}\label{cl:pgf:iid}
  $y_0(x) = \frac{1}{2}\left(1 - \mathfrak{h}^2_\vect{p}(x)\right)$.
\end{claim}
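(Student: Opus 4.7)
The plan is to mimic the generating-function trick from \cref{cl:pgf:ary} to first pin down $y_1$, and then exploit a lucky factorisation relating $y_0'$ and $y_1'$ to obtain $y_0$ by direct integration.

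First I would set $\psi(u) = 1 - y_1(u)$ and rewrite \eqref{eq:p:iid:ary} as
\[
\psi'(u) = -\frac{g_\vect{p}(\psi(u))}{\psi(u)},
\qquad\text{equivalently}\qquad
\frac{\psi(u)\,\psi'(u)}{g_\vect{p}(\psi(u))} = -1.
\]
Integrating over $[0,x]$, using $\psi(0) = 1 - y_1(0) = 1$, and substituting $z = \psi(u)$ gives
\[
\int_{\psi(x)}^{1} \frac{z\,dz}{g_\vect{p}(z)} = x,
\]
so by the definition of $\mathfrak{h}_\vect{p}$ we conclude $1 - y_1(x) = \mathfrak{h}_\vect{p}(x)$.

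Next I would compare \eqref{eq:p:iid} with \eqref{eq:p:iid:ary} to read off the identity $y_0'(x) = (1 - y_1(x))\, y_1'(x)$. Integrating from $0$ to $x$ (with $y_1(0)=0$) produces $y_0(x) = y_1(x) - \tfrac{1}{2} y_1(x)^2$. Substituting $y_1(x) = 1 - \mathfrak{h}_\vect{p}(x)$ and simplifying,
\[
y_0(x) = \bigl(1 - \mathfrak{h}_\vect{p}(x)\bigr) - \tfrac{1}{2}\bigl(1 - \mathfrak{h}_\vect{p}(x)\bigr)^2 = \tfrac{1}{2}\bigl(1 - \mathfrak{h}_\vect{p}(x)\bigr)\bigl(1 + \mathfrak{h}_\vect{p}(x)\bigr) = \tfrac{1}{2}\bigl(1 - \mathfrak{h}_\vect{p}^{\,2}(x)\bigr),
\]
as required.

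No step in this plan is conceptually deep; the only technical point to watch is that the change of variables $z = \psi(u)$ is legitimate, which needs $\psi$ to be monotone on $[0,x]$. Since $g_\vect{p}(z) \ge 0$ and $\psi(u) \in [0,1]$, the equation for $\psi'$ gives $\psi' \le 0$, so $\psi$ is at least weakly decreasing; strict monotonicity (and hence invertibility) holds as long as $g_\vect{p}(\psi(u)) > 0$, which can be ensured by discarding the trivial case where the branching process is almost surely trivial, or alternatively by a short limiting argument. Apart from this minor point, the argument is a clean two-step integration.
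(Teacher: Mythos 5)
Your proof is correct and follows essentially the same two-step strategy as the paper's: first identify $y_1(x)=1-\mathfrak{h}_\vect{p}(x)$ via the change of variable $\psi(u)=1-y_1(u)$ (the paper calls this $\varphi$), then combine~\eqref{eq:p:iid} and~\eqref{eq:p:iid:ary} to deduce $y_0'=(1-y_1)\,y_1'$ and integrate. The only cosmetic difference is that the paper rewrites $y_0'=-\mathfrak{h}\mathfrak{h}'$ and integrates in $\mathfrak{h}$ directly, while you integrate in $y_1$ and substitute afterwards; the two computations are trivially equivalent.
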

\begin{proof}
  Fix $x\in[0,1]$, let $\mathfrak{h}=\mathfrak{h}_\vect{p}(x)$ and
  $g(z)=g_\vect{p}(z)$.
  Define $\varphi:[0,\beta]\to[\mathfrak{h},1]$, where
  $\beta=y_1^{-1}(1-\mathfrak{h})$, as follows:
  $\varphi(u)=1-y_1(u)$.  Note that by \eqref{eq:p:iid:ary},
  \begin{equation*}
  \varphi'(u) = -y_1'(u) = -\frac{g(\varphi(u))}{\varphi(u)}.
  \end{equation*}
  Thus
  \begin{equation*}
  x = \int_{\mathfrak{h}}^1 \frac{zdz}{g(z)}
  = -\int_{\varphi(0)}^{\varphi(\beta)} \frac{zdz}{g(z)}
  = -\int_0^\beta \frac{\varphi'(z) \varphi(z)dz}{g(\varphi(z))} = \beta,
  \end{equation*}
  hence $y_1(x) = 1-\mathfrak{h}$.  From~\eqref{eq:p:iid}
  and~\eqref{eq:p:iid:ary} it follows that
  $y_0'(x)=g(\mathfrak{h}) =
  y_1'(x)\cdot\mathfrak{h}=-\mathfrak{h}\mathfrak{h}'$, and since $y_0(0)=0$ it
  follows that $y_0(x) = \frac{1}{2}\left(1-\mathfrak{h}^2\right)$.
\end{proof}

In particular, it follows from \cref{cl:pgf:iid} that
$\gir(\reg{\vect{p}}) = \frac{1}{2}\left(1 - \mathfrak{h}^2_\vect{p}(1)\right)$.

\section{Applications}\label{sec:applications}
The goal of this section is to demonstrate the power of the introduced framework by calculating the greedy independence ratio for several natural (random) graph sequences.
We do so by finding their local limit and solving its fundamental system of ODEs, as described in \cref{thm:de}.
In some cases, we may use probability generating functions to ease calculations as described in \cref{sec:pgf}.
In the following, we analyse the process in the setting of various commonly studied (random) graph sequences.
To highlight the method's applicability, we focus on cases where computational difficulties are minimal.

Note that not all measures on rooted graphs arise as local limits of finite graphs.
One necessary condition for a measure to be such a limit
is captured, informally, by the property that the root
is ``equally likely to be any vertex'', even though the graph may be infinite.
This notion can be made rigorous;
see, e.g., \cite{AL07} or \cite{Bor16}, {Chapter 3}.
Measures that have this property are called \defn{unimodular}\footnote{%
The question of whether every unimodular measure is a local limit of finite graphs is open.}.
In our analysis, we also consider non-unimodular random rooted graphs.
The reason for this is twofold:
(a) we wish to demonstrate applications of \cref{thm:de} and the methods described in \cref{sec:pgf}
    in various settings; and
(b) we apply results in non-unimodular settings in our analysis of unimodular measures.

As a final remark, we wish to stress that our list of applications is not intended to be exhaustive.
In particular, we do not consider random graphs with a given degree sequence,
a case that was analysed in \cites{BJM17,BJL17}.

\subsection{Infinite-Ray Stars}\label{sec:istar}
For $d\ge 1$, let $\istar{d}$ be the \defn{infinite-ray star}
with $d$ branches.
Formally, the vertex set of $\istar{d}$ is
$\{(0,0)\}\cup\{(i,j):i\in[d],j=1,2,\ldots\}$, and $(i,j)\sim(i',j')$ if
$|j-j'|=1$ and either $i=i'$ or $ii'=0$.  Note that $\istar{1}=\NN$ and
$\istar{2}=\ZZ$.  This is a two-type branching process, with types $d$ for the
root and $1$ for a branch vertex.
The fundamental system of ODEs in this case is $y_d'(x)=(1-y_1(x))^d$,
and for $d=1$ we obtain the equation $y_1'=1-y_1$ of which the solution is
$y_1(x)=1-e^{-x}$.
For $d>1$ we obtain the equation $y_d'=e^{-dx}$ of which the solution is
$y_d(x)=\frac{1}{d}(1-e^{-dx})$.
Since $\tau=d$ a.s., it follows that
$\gir(\istar{d})=y_d(1)=\zeta_d:=\frac{1}{d}(1-e^{-d})$.
In particular,
$\gir(\NN)=1-e^{-1}\approx 0.6321...$ and
$\gir(\ZZ)=\frac{1}{2}(1-e^{-2})\approx 0.43233...$.

As $\NN$ is a single type branching process and $\ZZ$ is a random tree with \iid{} degrees, we may use the alternative approach for calculating $\gir(\NN)$ and $\gir(\ZZ)$, as described in \cref{sec:pgf}.
Solving $\int_h^1 \frac{dz}{z}=1$ gives $h=e^{-1}$, hence by \cref{cl:pgf:ary}, $\gir(\NN)=1-e^{-1}$, and by \cref{cl:pgf:iid}, $\gir(\ZZ)=\frac{1}{2}\left(1-e^{-2}\right)$.

\paragraph{Paths and cycles}
The local limit of the sequences $P_n$ of paths and $C_n$ of cycles is $\ZZ$ (rooted arbitrarily).
It follows from the discussion above that $\gir(P_n),\gir(C_n)\sim\frac{1}{2}(1-e^{-2})$ \whp{}.
This asymptotic density was already calculated by Flory~\cite{Flo39} (who only considered the expected ratio) and independently by Page~\cite{Pag59} and can be thought of as the discrete variant of R\'enyi's parking constant (see~\cite{MathC}).
We remark that a somewhat similar asymptotic analysis of the random greedy maximal independent set on the path using an analogous process on $\ZZ$ appears in~\cite{Ger15}.

\subsection{Poisson Galton--Watson Trees}\label{sec:gw:pois}
A Poisson Galton--Watson tree $\gw{\lambda}$ is a single type branching process with offspring distribution $\pois(\lambda)$ for some parameter $\lambda\in(0,\infty)$.
The fundamental ODE in this case is $y'(x)=e^{-\lambda y(x)}$
(This can be calculated directly using \eqref{eq:p:ary}).
The solution for this differential equation is $y(x)=\ln(1+\lambda x)/\lambda$,
hence $\gir(\gw{\lambda})=y(1)=\ln(1+\lambda)/\lambda$.
The same result can be obtained using the probability generating function of
the Poisson distribution, as described in \cref{sec:pgf}.

\paragraph{Binomial random graphs}
Consider the binomial random graph $G(n,\lambda/n)$, which is the graph on $n$ vertices in which every pair of nodes is connected by an edge independently with probability $\lambda/n$.
It is easy to check that it converges locally to $\gw{\lambda}$
(see, e.g.,~\cite{CurRG}), hence $\gir(G(n,\lambda/n))\sim\ln(1+\lambda)/\lambda$ \whp{}, recovering a known result
(see~\cite{McD84}).

\subsection{Size-Biased Poisson Galton--Watson Trees}\label{sec:sbgw}
For $0<\lambda\le 1$, a size-biased Poisson Galton--Watson tree
$\sbgw{\lambda}$ can be
defined (see \cite{LPP95}) as a two-type simple branching process, with types
$\mathsf{s}$ (\emph{spine} vertices) and $\mathsf{t}$ (\emph{tree} vertices),
where a spine vertex has $1$ spine child plus $\pois(\lambda)$ tree children,
a tree vertex has $\pois(\lambda)$ tree children, and the root is a spine vertex (when $\lambda=1$, this is sometimes called the \emph{skeleton tree}).
The fundamental system of ODEs in this case is
\begin{align*}
y_\mathsf{s}'(x) &=
x\sum_{d=0}^\infty \frac{(\lambda x)^d}{e^{\lambda x} d!}
\left(1-\frac{y_\mathsf{s}(x)}{x}\right)
\left(1-\frac{y_\mathsf{t}(x)}{x}\right)^d
+ (1-x)\sum_{d=0}^{\infty}\frac{(\lambda x)^d}{e^{\lambda x} d!}
\left(1-\frac{y_\mathsf{t}(x)}{x}\right)^d\\
&= \left(1-y_\mathsf{s}(x)\right)
\sum_{d=0}^\infty \frac{(\lambda x)^d}{e^{\lambda x} d!}
\left(1-\frac{y_\mathsf{t}(x)}{x}\right)^d
= \left(1-y_\mathsf{s}(x)\right) e^{-\lambda y_\mathsf{t}(x)},
\end{align*}
and from \cref{sec:gw:pois} we obtain $y_{\mathsf{t}}(x)=\ln(1+\lambda x)/\lambda$.
Hence
$y_{\mathsf{s}}'(x) = (1-y_{\mathsf{s}}(x))/(1+\lambda x)$, and the solution
for that equation is $y_{\mathsf{s}}(x)=1-\exp(-\ln(1+\lambda x)/\lambda)$.
Thus
$\gir(\sbgw{\lambda})
=y_{\mathsf{s}}(1)
=1-(1+\lambda)^{-1/\lambda}
=1-e^{-\gir(\gw{\lambda})}$.
In particular, $\gir(\sbgw{1}) = 1/2$.

\paragraph{Uniform spanning trees}
It is a classical (and beautiful) fact (see, e.g.,~\cites{Kol77,Gri80})
that if $T_n$ is a uniformly chosen random tree drawn from the set of $n^{n-2}$ trees on (labelled) $n$ vertices, then $T_n$ converges locally to $\sbgw{1}$, hence $\gir(T_n)\sim 1/2$ \whp{}.
To the best of our knowledge, this intriguing fact was not previously known.
Recently, after a conference version of this paper was published, Contat~\cite{Con22} proved a much stronger statement concerning the cardinality of the random greedy independent set in uniform random trees, showing that it has essentially the same law as its complement.
In a newer version of her paper, she obtained the exact distribution of the size of the random greedy independent set, showing that it has the same distribution as the number of vertices at even height in a uniformly sampled rooted random tree.
The exact distribution was also obtained, independently, by Panholzer~\cite{Pan20}.

Nachmias and Peres~\cite{NP22} showed (see also~\cite{HNT18}) that if $G_n$ is a sequence of finite, simple, connected regular graphs with degree tending to infinity,
and $T_n$ is the uniform spanning tree of $G_n$,
then $T_n$ converges locally to $\sbgw{1}$.
It follows that $\gir(T_n)\sim 1/2$ \whp{} in this case as well.

\paragraph{Random functional digraphs} 
It can be easily verified that the local limit of a random functional digraph $\vec{G}_1(n)$ (the digraph on $n$ vertices whose edges are $(i,\pi(i))$ for a uniform random permutation $\pi$), with orientations ignored, is also $\sbgw{1}$, hence $\gir(\vec{G}_1)\sim 1/2$ \whp{}.

\paragraph{Sparse random planar graphs}
\newcommand{\cP}{\mathcal{P}}
Let $\cP(n,\lambda)$ denote the uniform distribution over the set of (labelled) planar graphs on $n$ vertices with $\lambda n/2$ edges.
According to a recent result by Kang and Missethan~\cite{KM21+}, if $\lambda\in(0,1]$ then $\cP(n,\lambda)$ converges locally to $\gw{\lambda}$, hence $\gir(\cP(n,\lambda))\sim\ln(1+\lambda)/\lambda$ \whp{}; and if $\lambda\in(1,2]$ then $\cP(n,\lambda)$ converges locally to $(\lambda-1)\sbgw{1}+(2-\lambda)\gw{1}$, namely, to the random tree which is sampled from $\sbgw{1}$ with probability $\lambda-1$ and from $\gw{1}$ with probability $2-\lambda$.
It follows that in this case, $\gir(\cP(n,\lambda))\sim(\lambda-1)/2+(2-\lambda)\ln{2}$ \whp{}.
Note that $\gir(\cP(n,\lambda))$ is continuous for $\lambda\in(0,2]$.

\subsection{\texorpdfstring{$d$}{d}-ary Trees}\label{sec:ary}
For $d>1$, let $\ary{d}$ be the $d$-ary tree.  It may be viewed as a (single
type) branching process.  It thus immediately follows from \eqref{eq:p:ary} that
$y'(x)=(1-y(x))^d$.
The solution for this differential equation is $y(x)=1-((d-1)x+1)^{-1/(d-1)}$.
It follows that $\gir(\ary{d}) = y(1) = 1-d^{-1/(d-1)}$.
This fact also follows easily using the generating functions approach described
in \cref{sec:pgf}.
A remarkable example is $\gir(\ary{2}) = 1/2$.

\subsection{Regular Trees}\label{sec:reg}
For $d\ge 3$, let $\reg{d}$ be the $d$-regular tree.  It may viewed as a
two-type branching process with types $d$ for the root and $d-1$ for the rest of
the vertices.
The fundamental system of ODEs in this case is $y_d'(x)=(1-y_{d-1}(x))^d$,
and from \cref{sec:ary} we obtain $y_{d-1}(x)=1-((d-2)x+1)^{-1/(d-2)}$.
It follows that $y_d'(x) = ((d-2)x+1)^{-d/(d-2)}$, of which the solution is
$y_d(x) = (1 - ((d-2)x+1)^{-2/(d-2)})/2$.
Therefore,
\begin{equation}\label{eq:regtree}
\gir(\reg{d}) = y_d(1) = \frac{1}{2}\left(1-(d-1)^{-2/(d-2)}\right).
\end{equation}
We remark that a similar derivation of \eqref{eq:regtree} was obtained, using a similar method, by Penrose and Sudbury~\cite{PS05}, and was derived earlier by Fan and Percus~\cite{FP91}.
In both works, however, the application for random regular graphs (see below) is absent.

As with $d$-ary trees, here again the generating functions approach works
easily: the solution to $\int_{h(x)}^1 z^{d-1}dz=x$ is $h(x)=(1-(2-d)x)^{1/(2-d)}$, and the result follows from \cref{cl:pgf:iid}.
Remarkable examples include $\gir(\reg{3}) = 3/8$ and $\gir(\reg{4}) = 1/3$.

\paragraph{Random regular graphs}
Since the random regular graph $G(n,d)$ (a uniformly sampled graph from the set of all $d$-regular graphs on $n$ vertices, assuming $dn$ is even) converges locally to $\reg{d}$ (see, e.g.,~\cite{Wor99RRG}),
the above result for this case is exactly~\cite{Wor95}*{Theorem 4}.
In fact, since any sequence of $d$-regular graphs with girth tending to infinity converges locally to $\reg{d}$, we also recover \cite{LW07}*{Theorem 2}.
This latter result was proved later, using different methods, by Gamarnik and Goldberg~\cite{GG10}.

\section{Hypergraphs}\label{sec:hyper}
\begin{figure}
  \captionsetup{width=0.879\textwidth,font=small}
  \centering
\begin{tikzpicture}[svertex/.style={fill,circle},edge/.style={color=brown,fill,fill opacity=0.1}]
    \node[svertex,inner sep=1.50pt] (0) at (0,0) {};
    \node[svertex,inner sep=1.00pt] (0ua) at ($(0) + (90.0:1.00)$) {};
    \draw[edge,rotate around={90:(0ua)}] (0ua) ellipse (1.40 and 0.20);
    \node[svertex,inner sep=1.00pt] (0ub) at ($(0) + (90.0:2.00)$) {};
    \node[svertex,inner sep=1.00pt] (0la) at ($(0) + (-30.0:1.00)$) {};
    \draw[edge,rotate around={-30:(0la)}] (0la) ellipse (1.40 and 0.20);
    \node[svertex,inner sep=1.00pt] (0lb) at ($(0) + (-30.0:2.00)$) {};
    \node[svertex,inner sep=1.00pt] (0ra) at ($(0) + (-150.0:1.00)$) {};
    \draw[edge,rotate around={-150:(0ra)}] (0ra) ellipse (1.40 and 0.20);
    \node[svertex,inner sep=1.00pt] (0rb) at ($(0) + (-150.0:2.00)$) {};
    \node[svertex,inner sep=0.50pt] (0uala) at ($(0ua) + (165.0:0.50)$) {};
    \draw[edge,rotate around={165:(0uala)}] (0uala) ellipse (0.70 and 0.14);
    \node[svertex,inner sep=0.50pt] (0ualb) at ($(0ua) + (165.0:1.00)$) {};
    \node[svertex,inner sep=0.50pt] (0uara) at ($(0ua) + (15.0:0.50)$) {};
    \draw[edge,rotate around={15:(0uara)}] (0uara) ellipse (0.70 and 0.14);
    \node[svertex,inner sep=0.50pt] (0uarb) at ($(0ua) + (15.0:1.00)$) {};
    \node[svertex,inner sep=0.50pt] (0ubla) at ($(0ub) + (150.0:0.50)$) {};
    \draw[edge,rotate around={150:(0ubla)}] (0ubla) ellipse (0.70 and 0.14);
    \node[svertex,inner sep=0.50pt] (0ublb) at ($(0ub) + (150.0:1.00)$) {};
    \node[svertex,inner sep=0.50pt] (0ubra) at ($(0ub) + (30.0:0.50)$) {};
    \draw[edge,rotate around={30:(0ubra)}] (0ubra) ellipse (0.70 and 0.14);
    \node[svertex,inner sep=0.50pt] (0ubrb) at ($(0ub) + (30.0:1.00)$) {};
    \node[svertex,inner sep=0.50pt] (0lala) at ($(0la) + (45.0:0.50)$) {};
    \draw[edge,rotate around={45:(0lala)}] (0lala) ellipse (0.70 and 0.14);
    \node[svertex,inner sep=0.50pt] (0lalb) at ($(0la) + (45.0:1.00)$) {};
    \node[svertex,inner sep=0.50pt] (0lara) at ($(0la) + (-105.0:0.50)$) {};
    \draw[edge,rotate around={-105:(0lara)}] (0lara) ellipse (0.70 and 0.14);
    \node[svertex,inner sep=0.50pt] (0larb) at ($(0la) + (-105.0:1.00)$) {};
    \node[svertex,inner sep=0.50pt] (0lbla) at ($(0lb) + (30.0:0.50)$) {};
    \draw[edge,rotate around={30:(0lbla)}] (0lbla) ellipse (0.70 and 0.14);
    \node[svertex,inner sep=0.50pt] (0lblb) at ($(0lb) + (30.0:1.00)$) {};
    \node[svertex,inner sep=0.50pt] (0lbra) at ($(0lb) + (-90.0:0.50)$) {};
    \draw[edge,rotate around={-90:(0lbra)}] (0lbra) ellipse (0.70 and 0.14);
    \node[svertex,inner sep=0.50pt] (0lbrb) at ($(0lb) + (-90.0:1.00)$) {};
    \node[svertex,inner sep=0.50pt] (0rala) at ($(0ra) + (-75.0:0.50)$) {};
    \draw[edge,rotate around={-75:(0rala)}] (0rala) ellipse (0.70 and 0.14);
    \node[svertex,inner sep=0.50pt] (0ralb) at ($(0ra) + (-75.0:1.00)$) {};
    \node[svertex,inner sep=0.50pt] (0rara) at ($(0ra) + (-225.0:0.50)$) {};
    \draw[edge,rotate around={-225:(0rara)}] (0rara) ellipse (0.70 and 0.14);
    \node[svertex,inner sep=0.50pt] (0rarb) at ($(0ra) + (-225.0:1.00)$) {};
    \node[svertex,inner sep=0.50pt] (0rbla) at ($(0rb) + (-90.0:0.50)$) {};
    \draw[edge,rotate around={-90:(0rbla)}] (0rbla) ellipse (0.70 and 0.14);
    \node[svertex,inner sep=0.50pt] (0rblb) at ($(0rb) + (-90.0:1.00)$) {};
    \node[svertex,inner sep=0.50pt] (0rbra) at ($(0rb) + (-210.0:0.50)$) {};
    \draw[edge,rotate around={-210:(0rbra)}] (0rbra) ellipse (0.70 and 0.14);
    \node[svertex,inner sep=0.50pt] (0rbrb) at ($(0rb) + (-210.0:1.00)$) {};
    \node[svertex,inner sep=0.20pt] (0ualala) at ($(0uala) + (240.0:0.20)$) {};
    \draw[edge,rotate around={240:(0ualala)}] (0ualala) ellipse (0.28 and 0.09);
    \node[svertex,inner sep=0.20pt] (0ualalb) at ($(0uala) + (240.0:0.40)$) {};
    \node[svertex,inner sep=0.20pt] (0ualara) at ($(0uala) + (90.0:0.20)$) {};
    \draw[edge,rotate around={90:(0ualara)}] (0ualara) ellipse (0.28 and 0.09);
    \node[svertex,inner sep=0.20pt] (0ualarb) at ($(0uala) + (90.0:0.40)$) {};
    \node[svertex,inner sep=0.20pt] (0ualbla) at ($(0ualb) + (225.0:0.20)$) {};
    \draw[edge,rotate around={225:(0ualbla)}] (0ualbla) ellipse (0.28 and 0.09);
    \node[svertex,inner sep=0.20pt] (0ualblb) at ($(0ualb) + (225.0:0.40)$) {};
    \node[svertex,inner sep=0.20pt] (0ualbra) at ($(0ualb) + (105.0:0.20)$) {};
    \draw[edge,rotate around={105:(0ualbra)}] (0ualbra) ellipse (0.28 and 0.09);
    \node[svertex,inner sep=0.20pt] (0ualbrb) at ($(0ualb) + (105.0:0.40)$) {};
    \node[svertex,inner sep=0.20pt] (0uarala) at ($(0uara) + (90.0:0.20)$) {};
    \draw[edge,rotate around={90:(0uarala)}] (0uarala) ellipse (0.28 and 0.09);
    \node[svertex,inner sep=0.20pt] (0uaralb) at ($(0uara) + (90.0:0.40)$) {};
    \node[svertex,inner sep=0.20pt] (0uarara) at ($(0uara) + (-60.0:0.20)$) {};
    \draw[edge,rotate around={-60:(0uarara)}] (0uarara) ellipse (0.28 and 0.09);
    \node[svertex,inner sep=0.20pt] (0uararb) at ($(0uara) + (-60.0:0.40)$) {};
    \node[svertex,inner sep=0.20pt] (0uarbla) at ($(0uarb) + (75.0:0.20)$) {};
    \draw[edge,rotate around={75:(0uarbla)}] (0uarbla) ellipse (0.28 and 0.09);
    \node[svertex,inner sep=0.20pt] (0uarblb) at ($(0uarb) + (75.0:0.40)$) {};
    \node[svertex,inner sep=0.20pt] (0uarbra) at ($(0uarb) + (-45.0:0.20)$) {};
    \draw[edge,rotate around={-45:(0uarbra)}] (0uarbra) ellipse (0.28 and 0.09);
    \node[svertex,inner sep=0.20pt] (0uarbrb) at ($(0uarb) + (-45.0:0.40)$) {};
    \node[svertex,inner sep=0.20pt] (0ublala) at ($(0ubla) + (225.0:0.20)$) {};
    \draw[edge,rotate around={225:(0ublala)}] (0ublala) ellipse (0.28 and 0.09);
    \node[svertex,inner sep=0.20pt] (0ublalb) at ($(0ubla) + (225.0:0.40)$) {};
    \node[svertex,inner sep=0.20pt] (0ublara) at ($(0ubla) + (75.0:0.20)$) {};
    \draw[edge,rotate around={75:(0ublara)}] (0ublara) ellipse (0.28 and 0.09);
    \node[svertex,inner sep=0.20pt] (0ublarb) at ($(0ubla) + (75.0:0.40)$) {};
    \node[svertex,inner sep=0.20pt] (0ublbla) at ($(0ublb) + (210.0:0.20)$) {};
    \draw[edge,rotate around={210:(0ublbla)}] (0ublbla) ellipse (0.28 and 0.09);
    \node[svertex,inner sep=0.20pt] (0ublblb) at ($(0ublb) + (210.0:0.40)$) {};
    \node[svertex,inner sep=0.20pt] (0ublbra) at ($(0ublb) + (90.0:0.20)$) {};
    \draw[edge,rotate around={90:(0ublbra)}] (0ublbra) ellipse (0.28 and 0.09);
    \node[svertex,inner sep=0.20pt] (0ublbrb) at ($(0ublb) + (90.0:0.40)$) {};
    \node[svertex,inner sep=0.20pt] (0ubrala) at ($(0ubra) + (105.0:0.20)$) {};
    \draw[edge,rotate around={105:(0ubrala)}] (0ubrala) ellipse (0.28 and 0.09);
    \node[svertex,inner sep=0.20pt] (0ubralb) at ($(0ubra) + (105.0:0.40)$) {};
    \node[svertex,inner sep=0.20pt] (0ubrara) at ($(0ubra) + (-45.0:0.20)$) {};
    \draw[edge,rotate around={-45:(0ubrara)}] (0ubrara) ellipse (0.28 and 0.09);
    \node[svertex,inner sep=0.20pt] (0ubrarb) at ($(0ubra) + (-45.0:0.40)$) {};
    \node[svertex,inner sep=0.20pt] (0ubrbla) at ($(0ubrb) + (90.0:0.20)$) {};
    \draw[edge,rotate around={90:(0ubrbla)}] (0ubrbla) ellipse (0.28 and 0.09);
    \node[svertex,inner sep=0.20pt] (0ubrblb) at ($(0ubrb) + (90.0:0.40)$) {};
    \node[svertex,inner sep=0.20pt] (0ubrbra) at ($(0ubrb) + (-30.0:0.20)$) {};
    \draw[edge,rotate around={-30:(0ubrbra)}] (0ubrbra) ellipse (0.28 and 0.09);
    \node[svertex,inner sep=0.20pt] (0ubrbrb) at ($(0ubrb) + (-30.0:0.40)$) {};
    \node[svertex,inner sep=0.20pt] (0lalala) at ($(0lala) + (120.0:0.20)$) {};
    \draw[edge,rotate around={120:(0lalala)}] (0lalala) ellipse (0.28 and 0.09);
    \node[svertex,inner sep=0.20pt] (0lalalb) at ($(0lala) + (120.0:0.40)$) {};
    \node[svertex,inner sep=0.20pt] (0lalara) at ($(0lala) + (-30.0:0.20)$) {};
    \draw[edge,rotate around={-30:(0lalara)}] (0lalara) ellipse (0.28 and 0.09);
    \node[svertex,inner sep=0.20pt] (0lalarb) at ($(0lala) + (-30.0:0.40)$) {};
    \node[svertex,inner sep=0.20pt] (0lalbla) at ($(0lalb) + (105.0:0.20)$) {};
    \draw[edge,rotate around={105:(0lalbla)}] (0lalbla) ellipse (0.28 and 0.09);
    \node[svertex,inner sep=0.20pt] (0lalblb) at ($(0lalb) + (105.0:0.40)$) {};
    \node[svertex,inner sep=0.20pt] (0lalbra) at ($(0lalb) + (-15.0:0.20)$) {};
    \draw[edge,rotate around={-15:(0lalbra)}] (0lalbra) ellipse (0.28 and 0.09);
    \node[svertex,inner sep=0.20pt] (0lalbrb) at ($(0lalb) + (-15.0:0.40)$) {};
    \node[svertex,inner sep=0.20pt] (0larala) at ($(0lara) + (-30.0:0.20)$) {};
    \draw[edge,rotate around={-30:(0larala)}] (0larala) ellipse (0.28 and 0.09);
    \node[svertex,inner sep=0.20pt] (0laralb) at ($(0lara) + (-30.0:0.40)$) {};
    \node[svertex,inner sep=0.20pt] (0larara) at ($(0lara) + (-180.0:0.20)$) {};
    \draw[edge,rotate around={-180:(0larara)}] (0larara) ellipse (0.28 and 0.09);
    \node[svertex,inner sep=0.20pt] (0lararb) at ($(0lara) + (-180.0:0.40)$) {};
    \node[svertex,inner sep=0.20pt] (0larbla) at ($(0larb) + (-45.0:0.20)$) {};
    \draw[edge,rotate around={-45:(0larbla)}] (0larbla) ellipse (0.28 and 0.09);
    \node[svertex,inner sep=0.20pt] (0larblb) at ($(0larb) + (-45.0:0.40)$) {};
    \node[svertex,inner sep=0.20pt] (0larbra) at ($(0larb) + (-165.0:0.20)$) {};
    \draw[edge,rotate around={-165:(0larbra)}] (0larbra) ellipse (0.28 and 0.09);
    \node[svertex,inner sep=0.20pt] (0larbrb) at ($(0larb) + (-165.0:0.40)$) {};
    \node[svertex,inner sep=0.20pt] (0lblala) at ($(0lbla) + (105.0:0.20)$) {};
    \draw[edge,rotate around={105:(0lblala)}] (0lblala) ellipse (0.28 and 0.09);
    \node[svertex,inner sep=0.20pt] (0lblalb) at ($(0lbla) + (105.0:0.40)$) {};
    \node[svertex,inner sep=0.20pt] (0lblara) at ($(0lbla) + (-45.0:0.20)$) {};
    \draw[edge,rotate around={-45:(0lblara)}] (0lblara) ellipse (0.28 and 0.09);
    \node[svertex,inner sep=0.20pt] (0lblarb) at ($(0lbla) + (-45.0:0.40)$) {};
    \node[svertex,inner sep=0.20pt] (0lblbla) at ($(0lblb) + (90.0:0.20)$) {};
    \draw[edge,rotate around={90:(0lblbla)}] (0lblbla) ellipse (0.28 and 0.09);
    \node[svertex,inner sep=0.20pt] (0lblblb) at ($(0lblb) + (90.0:0.40)$) {};
    \node[svertex,inner sep=0.20pt] (0lblbra) at ($(0lblb) + (-30.0:0.20)$) {};
    \draw[edge,rotate around={-30:(0lblbra)}] (0lblbra) ellipse (0.28 and 0.09);
    \node[svertex,inner sep=0.20pt] (0lblbrb) at ($(0lblb) + (-30.0:0.40)$) {};
    \node[svertex,inner sep=0.20pt] (0lbrala) at ($(0lbra) + (-15.0:0.20)$) {};
    \draw[edge,rotate around={-15:(0lbrala)}] (0lbrala) ellipse (0.28 and 0.09);
    \node[svertex,inner sep=0.20pt] (0lbralb) at ($(0lbra) + (-15.0:0.40)$) {};
    \node[svertex,inner sep=0.20pt] (0lbrara) at ($(0lbra) + (-165.0:0.20)$) {};
    \draw[edge,rotate around={-165:(0lbrara)}] (0lbrara) ellipse (0.28 and 0.09);
    \node[svertex,inner sep=0.20pt] (0lbrarb) at ($(0lbra) + (-165.0:0.40)$) {};
    \node[svertex,inner sep=0.20pt] (0lbrbla) at ($(0lbrb) + (-30.0:0.20)$) {};
    \draw[edge,rotate around={-30:(0lbrbla)}] (0lbrbla) ellipse (0.28 and 0.09);
    \node[svertex,inner sep=0.20pt] (0lbrblb) at ($(0lbrb) + (-30.0:0.40)$) {};
    \node[svertex,inner sep=0.20pt] (0lbrbra) at ($(0lbrb) + (-150.0:0.20)$) {};
    \draw[edge,rotate around={-150:(0lbrbra)}] (0lbrbra) ellipse (0.28 and 0.09);
    \node[svertex,inner sep=0.20pt] (0lbrbrb) at ($(0lbrb) + (-150.0:0.40)$) {};
    \node[svertex,inner sep=0.20pt] (0ralala) at ($(0rala) + (0.0:0.20)$) {};
    \draw[edge,rotate around={0:(0ralala)}] (0ralala) ellipse (0.28 and 0.09);
    \node[svertex,inner sep=0.20pt] (0ralalb) at ($(0rala) + (0.0:0.40)$) {};
    \node[svertex,inner sep=0.20pt] (0ralara) at ($(0rala) + (-150.0:0.20)$) {};
    \draw[edge,rotate around={-150:(0ralara)}] (0ralara) ellipse (0.28 and 0.09);
    \node[svertex,inner sep=0.20pt] (0ralarb) at ($(0rala) + (-150.0:0.40)$) {};
    \node[svertex,inner sep=0.20pt] (0ralbla) at ($(0ralb) + (-15.0:0.20)$) {};
    \draw[edge,rotate around={-15:(0ralbla)}] (0ralbla) ellipse (0.28 and 0.09);
    \node[svertex,inner sep=0.20pt] (0ralblb) at ($(0ralb) + (-15.0:0.40)$) {};
    \node[svertex,inner sep=0.20pt] (0ralbra) at ($(0ralb) + (-135.0:0.20)$) {};
    \draw[edge,rotate around={-135:(0ralbra)}] (0ralbra) ellipse (0.28 and 0.09);
    \node[svertex,inner sep=0.20pt] (0ralbrb) at ($(0ralb) + (-135.0:0.40)$) {};
    \node[svertex,inner sep=0.20pt] (0rarala) at ($(0rara) + (-150.0:0.20)$) {};
    \draw[edge,rotate around={-150:(0rarala)}] (0rarala) ellipse (0.28 and 0.09);
    \node[svertex,inner sep=0.20pt] (0raralb) at ($(0rara) + (-150.0:0.40)$) {};
    \node[svertex,inner sep=0.20pt] (0rarara) at ($(0rara) + (-300.0:0.20)$) {};
    \draw[edge,rotate around={-300:(0rarara)}] (0rarara) ellipse (0.28 and 0.09);
    \node[svertex,inner sep=0.20pt] (0rararb) at ($(0rara) + (-300.0:0.40)$) {};
    \node[svertex,inner sep=0.20pt] (0rarbla) at ($(0rarb) + (-165.0:0.20)$) {};
    \draw[edge,rotate around={-165:(0rarbla)}] (0rarbla) ellipse (0.28 and 0.09);
    \node[svertex,inner sep=0.20pt] (0rarblb) at ($(0rarb) + (-165.0:0.40)$) {};
    \node[svertex,inner sep=0.20pt] (0rarbra) at ($(0rarb) + (-285.0:0.20)$) {};
    \draw[edge,rotate around={-285:(0rarbra)}] (0rarbra) ellipse (0.28 and 0.09);
    \node[svertex,inner sep=0.20pt] (0rarbrb) at ($(0rarb) + (-285.0:0.40)$) {};
    \node[svertex,inner sep=0.20pt] (0rblala) at ($(0rbla) + (-15.0:0.20)$) {};
    \draw[edge,rotate around={-15:(0rblala)}] (0rblala) ellipse (0.28 and 0.09);
    \node[svertex,inner sep=0.20pt] (0rblalb) at ($(0rbla) + (-15.0:0.40)$) {};
    \node[svertex,inner sep=0.20pt] (0rblara) at ($(0rbla) + (-165.0:0.20)$) {};
    \draw[edge,rotate around={-165:(0rblara)}] (0rblara) ellipse (0.28 and 0.09);
    \node[svertex,inner sep=0.20pt] (0rblarb) at ($(0rbla) + (-165.0:0.40)$) {};
    \node[svertex,inner sep=0.20pt] (0rblbla) at ($(0rblb) + (-30.0:0.20)$) {};
    \draw[edge,rotate around={-30:(0rblbla)}] (0rblbla) ellipse (0.28 and 0.09);
    \node[svertex,inner sep=0.20pt] (0rblblb) at ($(0rblb) + (-30.0:0.40)$) {};
    \node[svertex,inner sep=0.20pt] (0rblbra) at ($(0rblb) + (-150.0:0.20)$) {};
    \draw[edge,rotate around={-150:(0rblbra)}] (0rblbra) ellipse (0.28 and 0.09);
    \node[svertex,inner sep=0.20pt] (0rblbrb) at ($(0rblb) + (-150.0:0.40)$) {};
    \node[svertex,inner sep=0.20pt] (0rbrala) at ($(0rbra) + (-135.0:0.20)$) {};
    \draw[edge,rotate around={-135:(0rbrala)}] (0rbrala) ellipse (0.28 and 0.09);
    \node[svertex,inner sep=0.20pt] (0rbralb) at ($(0rbra) + (-135.0:0.40)$) {};
    \node[svertex,inner sep=0.20pt] (0rbrara) at ($(0rbra) + (-285.0:0.20)$) {};
    \draw[edge,rotate around={-285:(0rbrara)}] (0rbrara) ellipse (0.28 and 0.09);
    \node[svertex,inner sep=0.20pt] (0rbrarb) at ($(0rbra) + (-285.0:0.40)$) {};
    \node[svertex,inner sep=0.20pt] (0rbrbla) at ($(0rbrb) + (-150.0:0.20)$) {};
    \draw[edge,rotate around={-150:(0rbrbla)}] (0rbrbla) ellipse (0.28 and 0.09);
    \node[svertex,inner sep=0.20pt] (0rbrblb) at ($(0rbrb) + (-150.0:0.40)$) {};
    \node[svertex,inner sep=0.20pt] (0rbrbra) at ($(0rbrb) + (-270.0:0.20)$) {};
    \draw[edge,rotate around={-270:(0rbrbra)}] (0rbrbra) ellipse (0.28 and 0.09);
    \node[svertex,inner sep=0.20pt] (0rbrbrb) at ($(0rbrb) + (-270.0:0.40)$) {};
\end{tikzpicture}
  \caption{The ball of radius $3$ around the root of the infinite $3$-uniform $3$-regular loose hypertree $\reg{3}^3$.}
  \label{fig:hypertree}
\end{figure}
Recently (after a conference version of this paper was posted), Nie and Verstra\"ete~\cite{NV21} analysed the random greedy algorithm for producing maximal independent sets in $r$-uniform $d$-regular high-girth (linear) hypergraphs.
In this section we show how to deduce their main results in our framework, assuming the girth tends to infinity.

It is not hard to check that the local limit of $r$-uniform $d$-regular hypergraphs with girth tending to infinity is the rooted infinite $r$-uniform $d$-regular loose hypertree (see \cref{fig:hypertree}), denoted here by $\reg{d}^r$ (this is just the hypergraph-analogue to the graph case described in \cref{sec:reg}).
This hypertree may be viewed as a two-type branching process with types $d$ for the root and $d-1$ for the rest of the vertices, where a type $d$ vertex has $d$ incident edges, on each there are $r-1$ additional vertices of type $d-1$, and a type $d-1$ vertex has $d-1$ incident edges, on each there are $r-1$ additional vertices of type $d-1$.
The fundamental system of ODEs in this case is thus
\begin{align}
  y'_d(x) &= (1-y_{d-1}^{r-1}(x))^d, \label{eq:hyper:1}\\
  y'_{d-1}(x) &= (1-y_{d-1}^{r-1}(x))^{d-1}. \label{eq:hyper:2}
\end{align}
The second equation is separable, thus
\[
\begin{aligned}
  x &= \int dx
    = \int \frac{y'_{d-1}(x)}{\left(1-y_{d-1}^{r-1}(x)\right)^{d-1}}dx
    = \int \frac{dy}{\left(1-y^{r-1}\right)^{d-1}}\\
    &= \int \sum_{n=0}^\infty \binom{n+d-2}{d-2} y^{n(r-1)}dy\\
    &= \sum_{n=0}^\infty \binom{n+d-2}{d-2} \int y^{n(r-1)}dy
    = \sum_{n=0}^\infty \binom{n+d-2}{d-2} \frac{y^{n(r-1)+1}}{n(r-1)+1} + c.
\end{aligned}
\]
Considering the initial conditions, we obtain $c=0$.
Setting
\[
  H(y) = \sum_{n=0}^\infty \binom{n+d-2}{d-2} \frac{y^{n(r-1)+1}}{n(r-1)+1},
\]
the solution to~\eqref{eq:hyper:2} is $y_{d-1}(x)=H^{-1}(x)$.
As for~\eqref{eq:hyper:1}, set $u=H^{-1}(t)$ and then (by~\eqref{eq:hyper:2}),
$du=(1-u^{r-1})^{d-1}dt$, hence
\[
\begin{aligned}
  y_d(x) &= \int_0^x \left(1-y_{d-1}^{r-1}(t)\right)^d dt
         = \int_0^x \left(1-(H^{-1}(t))^{r-1}\right)^d dt\\
         &= \int_{H^{-1}(0)}^{H^{-1}(x)} \frac{\left(1-u^{r-1}\right)^d}{\left(1-u^{r-1}\right)^{d-1}} du\\
         &= \int_0^{H^{-1}(x)} (1-u^{r-1})du
         = H^{-1}(x)-\frac{\left(H^{-1}(x)\right)^r}{r}.
\end{aligned}
\]
In particular, $\iota(\reg{d}^r)=y_d(1) = H^{-1}(1)-(H^{-1}(1))^r/r$, and together with a hypergraph analogue of \cref{thm:mean} this recovers~\cite{NV21}*{Theorem 4}, up to the error bounds.
Using (a hypergraph analogue of) \cref{thm:main} we also obtain a nonquantitative (namely, without an explicit bound on the deviation from the mean) version of \cite{NV21}*{Theorem 5}.
The advantage of the above method is, however, its greater generality, as it allows for a wider range of assumptions on the underlying hypergraph.

\section{Lower bound in trees}\label{sec:trees}
Let us focus on trees.  How large can the expected greedy independent ratio be?  How small can it be?
The sequence of stars is a clear witness that the only possible asymptotic upper bound
is the trivial one, namely $1$.
In fact, the $n$-vertex star is the {\em unique} maximiser among $n$-vertex trees (see below).
Apparently, the lower bound is not trivial.
An immediate corollary of \cref{thm:trees,thm:mean} is that a tight asymptotic lower bound is $\gir(\ZZ)=(1-e^{-2})/2$
(compare with \cite{Sud09}).
The statement of \cref{thm:trees} is, however, much stronger: paths achieve the \emph{exact} (non-asymptotic) lower bound for the expected greedy independence ratio among the set of all trees of a given order.
It is reasonable to expect that the path is the {\em unique} minimiser;
our proof of \cref{thm:trees} does not imply that (see \cref{sec:open}).

To prove \cref{thm:trees} we will need to first gain deeper understanding of the behaviour of the greedy algorithm on the path.
For a graph $G$ denote by $\gic(G)$ the cardinality of its greedy independent set, and let $\egic(G)=\E[\gic(G)]$.
Let $\alpha_n=\egic(P_n)$.
Suppose the vertices of $P_n$ are $1,\ldots,n$,
and let $s$ be the vertex which is first in the permutation of the vertices.
Setting $\alpha_{-1}=\alpha_0=0$, we obtain the recursion
\begin{equation}\label{eq:path}
  \alpha_n = \E[\E[\gic(P_n)\mid s]]
  = \frac{1}{n}\sum_{i=1}^n (1+\alpha_{i-2}+\alpha_{n-i-1})
  = 1 + \frac{2}{n}\sum_{i=1}^n \alpha_{i-2}.
\end{equation}
The following explicit formula for $\alpha_n$ ($n\ge 0$) appears in \cite{FS62}:
\begin{equation}\label{eq:alpha:explicit}
\alpha_n = \sum_{i=0}^{n-1}\frac{(-2)^i(n-i)}{(i+1)!}.
\end{equation}
The main properties of $\alpha_n$ that we need in this section are given by the following two lemmas.
We defer their (somewhat technical) proofs to \cref{sec:calc}.

\begin{lemma}\label{lem:monotone:subadd}
  The sequence $\alpha_n$ is monotone increasing and subadditive.
\end{lemma}
A natural approach for trying to prove \cref{lem:monotone:subadd} is
by using the intuitive assertion that $\egic{}$ is monotone with respect to edge deletion.
This is unfortunately false;
indeed, let $S_n$ denote the star with $n$ leaves.
It is easy to verify that $\egic(S_n)=\frac{1}{n+1}\cdot 1 + \frac{n}{n+1}\cdot n =(n^2+1)/(n+1)$.
Let $T$ be obtained by taking two copies of $S_n$
and joining their centres by an edge $e$.
One can check that $\egic(T) = \frac{1}{n+1}\cdot(1+n) + \frac{n}{n+1}\cdot(n+\egic(S_n))
=(2n^3+2n^2+3n+1)/(n+1)^2$,
while $\egic(T-e) = 2\egic(S_n) = 2(n^2+1)/(n+1)$, which is strictly smaller for every $n\ge 2$.
We deal with the difficulty illustrated by this counterintuitive example by using a more involved argument;
see \cref{sec:calc} below.

Our analysis also relies on the following technical fact.
Define \[\xi_{n,\ell}=\sum_{j=1}^\ell \alpha_{n+j}.\]
\begin{lemma}\label{lem:xi}
  For every $\ell,a,b\ge 1$ it holds that
  $\xi_{a,\ell} + \xi_{b,\ell} \le \xi_{a+b,\ell} + \xi_{0,\ell}$.
\end{lemma}

Before we present the main tools to be used in the proof of \cref{thm:trees},
let us show that the $n$-vertex star is the unique maximiser of $\egic$.
Indeed, the independence number $\alpha(T)$ of any $n$-vertex tree $T$ that is not a star
is at most $n-2$. Thus,
\[
  \egic(S_{n-1}) = \frac{1}{n}((n-1)^2+1)
  = n-2+\frac{2}{n}
  > n-2 \ge \alpha(T) \ge \egic(T).
\]

\subsection{KC-Transformations}\label{sec:KC}
\tikzstyle{level 1}=[level distance=4.5mm,sibling angle=75]
\tikzstyle{level 2}=[level distance=3.5mm,line width=0.08ex,sibling angle=56]
\tikzstyle{level 3}=[level distance=2.75mm,line width=0.06ex,sibling angle=52]
\tikzstyle{level 4}=[level distance=2mm,line width=0.04ex,sibling angle=48]
\tikzstyle{level 5}=[level distance=1mm,line width=0.02ex,sibling angle=44]
\tikzstyle{level 6}=[level distance=1.75mm,line width=0.01ex,sibling angle=40]
\tikzstyle{level 1}=[level distance=6mm,sibling angle=90]
\tikzstyle{level 2}=[level distance=3.5mm,line width=0.08ex,sibling angle=70]
\tikzstyle{level 3}=[level distance=2.75mm,line width=0.06ex,sibling angle=50]
\tikzstyle{level 4}=[level distance=2mm,line width=0.04ex,sibling angle=40]
\tikzstyle{level 5}=[level distance=1mm,line width=0.02ex,sibling angle=30]
\tikzstyle{level 6}=[level distance=1.75mm,line width=0.01ex,sibling angle=20]
\begin{figure}
  \captionsetup{width=0.879\textwidth,font=small}
  \centering
  \begin{tikzpicture}[
    svertex/.style={fill,circle,inner sep=1.5pt},
    scale=1.5,
  ]
  \begin{scope}
  \node[svertex,label={[label distance=1.25em,anchor=north]90:$x$}] (x) at (0,0) {} [grow cyclic,rotate=180]
  child foreach \a in {0,1} {
    child foreach \b in {0,1} {
      child foreach \c in {0,1} {
        child foreach \d in {0,1} {
          child foreach \e in {0,1} {
            child foreach \leafcolor in {red, orange}
              { edge from parent [color=\leafcolor] }
          }
        }
      }
    } edge from parent
  };
  \node[svertex,label={[label distance=1.25em,anchor=north]90:$y$}] (y) at (1,0) {} [grow cyclic,rotate=0]
  child foreach \a in {0,1} {
    child foreach \b in {0,1} {
      child foreach \c in {0,1} {
        child foreach \d in {0,1} {
          child foreach \e in {0,1} {
            child foreach \leafcolor in {red, orange}
              { edge from parent [color=\leafcolor] }
          }
        }
      }
    } edge from parent
  };
  \draw (x) -- (y);
  \foreach \x in {0.167,0.333,...,0.833} {
    \node[fill,color=gray,circle,inner sep=0.75pt] at (\x,0) {};
  }
  \end{scope}
  \begin{scope}[xshift=3.5cm]
  \node (to) at (0,0) {$\implies$};
  \end{scope}
  \begin{scope}[xshift=6cm]
  \coordinate (_y) at (0,0) {} [grow cyclic,rotate=0]
  child foreach \a in {0,1} {
    child foreach \b in {0,1} {
      child foreach \c in {0,1} {
        child foreach \d in {0,1} {
          child foreach \e in {0,1} {
            child foreach \leafcolor in {red, orange}
              { edge from parent [color=\leafcolor] }
          }
        }
      }
    } edge from parent
  };
  \node[svertex,label={270:$x$}] (x) at (0,0) {} [grow cyclic,rotate=180]
  child foreach \a in {0,1} {
    child foreach \b in {0,1} {
      child foreach \c in {0,1} {
        child foreach \d in {0,1} {
          child foreach \e in {0,1} {
            child foreach \leafcolor in {red, orange}
              { edge from parent [color=\leafcolor] }
          }
        }
      }
    } edge from parent
  };
  \node[svertex,label={90:$y$}] (y) at (0,1) {};
  \draw (x) -- (y);
  \foreach \x in {0.167,0.333,...,0.833} {
    \node[fill,color=gray,circle,inner sep=0.75pt] at (0,\x) {};
  }
  \end{scope}
  \end{tikzpicture}
  \caption{A KC-transformation with respect to $x,y$.}
  \label{fig:KC}
\end{figure}
In this section we introduce the main tool that will be used to prove \cref{thm:trees}.
Let $T$ be a tree and let $x,y$ be two distinct vertices of $T$.  We say that the path between $x$ and $y$ is \defn{bare} if for every vertex $v\ne x,y$ on that path, $d_T(v)=2$.
Suppose $x,y$ are such that the unique path $P$ in $T$ between them is bare, and let $z$ be the neighbour of $y$ in that path.  For a vertex $v$, denote by $N(v)$ the neighbours of $v$ in $T$.  The \defn{KC-transformation} $\KC(T,x,y)$ of $T$ with respect to $x,y$ is the tree obtained from $T$ by deleting every edge between $y$ and $N(y)\sm z$  and adding the edges between
$x$ and $N(y)\sm z$ instead
(see \cref{fig:KC}).
Note that $\KC(T,x,y)\simeq \KC(T,y,x)$, so if we only care about unlabelled trees, we may simply write $\KC(T,P)$, for a bare path $P$ in $T$.
The term ``KC-transformation'' was coined by Bollob\'as and Tyomkyn~\cite{BT12} after Kelmans, who defined a similar operation on graphs~\cite{Kel81}, and Csikv\'ari, who defined it in this form~\cite{Csi10} under the name \term{generalized tree shift} (GTS).

A nice property of KC-transformations, first observed by Csikv\'ari~\cite{Csi10}, is that they induce a graded poset on the set of unlabelled trees of a given order, which is graded by the number of leaves.  In particular, this means that in that poset, the path is the unique minimum (say) and the star is the unique maximum.  Note that if $P$ contains a leaf then $\KC(T,P)\simeq T$, and otherwise $\KC(T,P)$ has one more leaf than $T$.
In the latter case, we say that the transformation is \defn{proper}.

\begin{figure}
  \captionsetup{width=0.879\textwidth,font=small}
  \centering
  \begin{tikzpicture}[
    svertex/.style={fill,circle,inner sep=1.5pt},
    scale=1.5,
  ]
  \begin{scope}
  \node[svertex,label={270:$v$}] (v) at (0,0) {} [grow cyclic,rotate=180]
  child foreach \a in {0,1} {
    child foreach \b in {0,1} {
      child foreach \c in {0,1} {
        child foreach \d in {0,1} {
          child foreach \e in {0,1} {
            child foreach \leafcolor in {red, orange}
              { edge from parent [color=\leafcolor] }
          }
        }
      }
    } edge from parent
  };
  \node[svertex] (_v) at (0,0) {} [grow cyclic,rotate=0]
  child foreach \a in {0,1} {
    child foreach \b in {0,1} {
      child foreach \c in {0,1} {
        child foreach \d in {0,1} {
          child foreach \e in {0,1} {
            child foreach \leafcolor in {red, orange}
              { edge from parent [color=\leafcolor] }
          }
        }
      }
    } edge from parent
  };
  \end{scope}
  \begin{scope}[xshift=2.5cm]
  \node (to) at (0,0) {$\implies$};
  \end{scope}
  \begin{scope}[xshift=5cm]
  \coordinate (_y) at (0,0) {} [grow cyclic,rotate=0]
  child[dotted,black!50!white] foreach \a in {0,1} {
    child foreach \b in {0,1} {
      child[solid,black] foreach \c in {0,1} {
        child foreach \d in {0,1} {
          child foreach \e in {0,1} {
            child foreach \leafcolor in {red, orange}
              { edge from parent [color=\leafcolor] }
          }
        }
      }
    } edge from parent
  };
  \node[svertex,black!25!white,label={270:$v$}] (v) at (0,0) {} [grow cyclic,rotate=180]
  child[dotted,black!50!white] foreach \a in {0,1} {
    child foreach \b in {0,1} {
      child[solid,black] foreach \c in {0,1} {
        child foreach \d in {0,1} {
          child foreach \e in {0,1} {
            child foreach \leafcolor in {red, orange}
              { edge from parent [color=\leafcolor] }
          }
        }
      }
    } edge from parent
  };
  \end{scope}
  \end{tikzpicture}
  \caption{A shattering at $v$.}
  \label{fig:shattering}
\end{figure}
Here is the plan for how to prove \cref{thm:trees}.
For a tree $T$ and a vertex $v$, denote by $T\star v$ the forest obtained from $T$ by \defn{shattering} $T$ at $v$, that is, by removing from $T$ the set $\{v\}\cup N(v)$ (see \cref{fig:shattering}).
Denote by $\kappa_v(T)$ the multiset of orders of trees in the forest $T\star v$, and by $\kappa(T)$ the sum of $\kappa_v(T)$ for all vertices $v$ in $T$.
Note that for trees with up to $3$ vertices, \cref{thm:trees} is trivial; we proceed by induction.
By the induction hypothesis,
\begin{equation}\label{eq:tree:induction}
  \egic(T) = \frac{1}{n}
  \sum_{v\in V(T)}\left(1+\sum_{S\in T\star v}\egic(S)\right)
  \ge 1 + \frac{1}{n}\sum_{v\in V(T)} \sum_{k\in\kappa_v(T)} \alpha_k
  = 1 + \frac{1}{n} \sum_{k\in \kappa(T)} \alpha_k.
\end{equation}
Therefore, it makes sense to study the quantities $\nu_v(T)=\sum_{k\in\kappa_v(T)}\alpha_k$ and $\nu(T)=\sum_{k\in\kappa(T)}\alpha_k$.
In fact, it would suffice to show that for any tree $T$ on $n$ vertices $\nu(T)\ge \nu(P_n)$, since by \eqref{eq:path} and \eqref{eq:tree:induction} we would obtain
\begin{equation*}
  \egic(T) \ge 1 + \frac{1}{n}\nu(T) \ge 1 + \frac{1}{n}\nu(P_n)
  = \egic(P_n).
\end{equation*}
We therefore reduced our problem to proving the following theorem about KC-transformations.
\begin{theorem}\label{thm:KC:nu}
  If $T$ is a tree and $P$ is a bare path in $T$ then $\nu(\KC(T,P))\ge\nu(T)$.
\end{theorem}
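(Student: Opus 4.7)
Setting $T' := \KC(T, P)$ and $\Delta_v := \nu_v(T') - \nu_v(T)$ for every $v \in V(T)$, I want to show $\sum_v \Delta_v \ge 0$. I parametrize the bare path as $P = v_0 v_1 \ldots v_k$ with $x = v_0$ and $y = v_k$; write $u_1, \ldots, u_m$ for the neighbours of $y$ in $T$ other than $v_{k-1}$ and $A_i$ for the subtree of $T$ rooted at $u_i$; define $w_1, \ldots, w_p$ and $B_1, \ldots, B_p$ analogously at $x$. Set $a_i := |A_i|$, $b_j := |B_j|$, $A := \sum_i a_i$, $B := \sum_j b_j$. The only structural difference between $T$ and $T'$ is that the subtrees $A_i$ hang from $x$ in $T'$ rather than from $y$, so I classify vertices by whether they lie deep inside one of the subtrees, at a subtree root $u_i$ or $w_j$, or on the bare path.

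\textbf{Straightforward cases.} For any $v \in V(A_i) \sm \{u_i\}$ (and symmetrically for $v \in V(B_j) \sm \{w_j\}$), the neighbourhood $N(v)$ is contained in $V(A_i)$ and is identical in $T$ and $T'$; hence removing $\{v\} \cup N(v)$ affects only the interior of $A_i$, and the unique ``outside'' component has the same size in both trees. So $\kappa_v(T) = \kappa_v(T')$ and $\Delta_v = 0$. For $v = u_i$, removing $\{u_i\} \cup N_T(u_i)$ in $T$ leaves one large outside component of size $k + B$ (the truncated path $v_0 \ldots v_{k-1}$ joined with all $B_j$'s), whereas the analogous removal in $T'$ (now including $x$ rather than $y$) breaks this into components of sizes $k, b_1, \ldots, b_p$; the remaining components agree. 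By subadditivity (\cref{cl:monotone:subadd}),
\begin{equation*}
  \Delta_{u_i} = \alpha_k + \sum_{j=1}^p \alpha_{b_j} - \alpha_{k + B} \ge 0,
\end{equation*}
and symmetrically $\Delta_{w_j} \ge 0$.

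\textbf{Path vertices and the telescoping identity.} The crux is the contribution from $V(P)$. A direct case-by-case computation shows that $\Delta_x$ carries a summand $+\sum_{i, \ell} \alpha_{s_{i, \ell}}$, where $\{s_{i, \ell}\}_\ell$ are the sizes of the subtrees obtained by deleting $u_i$ from $A_i$, which is cancelled by $-\sum_{i, \ell} \alpha_{s_{i, \ell}}$ in $\Delta_y$, leaving $\Delta_x + \Delta_y = \alpha_{k-1} + \alpha_{(k-1)+A+B} - \alpha_{(k-1)+A} - \alpha_{(k-1)+B}$. Similarly $\Delta_{v_1}$ and $\Delta_{v_{k-1}}$ carry cancelling $\pm\sum_i \alpha_{a_i}$ terms, yielding the same expression with $k-2$ in place of $k-1$; and for $2 \le i \le k-2$ one computes $\Delta_{v_i} = \alpha_{(i-1)+A+B} + \alpha_{k-i-1} - \alpha_{(i-1)+B} - \alpha_{(k-i-1)+A}$. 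Reindexing these by $j \in \{1, \ldots, k-1\}$ produces the telescoping identity
\begin{equation*}
  \sum_{v \in V(P)} \Delta_v = \sum_{j=1}^{k-1} \big(\alpha_j + \alpha_{j+A+B} - \alpha_{j+A} - \alpha_{j+B}\big) = \xi_{0, k-1} + \xi_{A+B, k-1} - \xi_{A, k-1} - \xi_{B, k-1},
\end{equation*}
which is nonnegative by \cref{cl:xi}. Together with the contributions from the $u_i$'s and $w_j$'s, this yields $\sum_v \Delta_v \ge 0$. The main obstacle is the bookkeeping underlying the path case, in particular the low-$k$ degeneracies ($k = 1$, where $\Delta_x + \Delta_y = 0$ directly and $V(P)$ has no interior; and $k = 2$, where $v_1 = v_{k-1}$ forces $\Delta_{v_1} = 0$), which must be verified separately to confirm that the telescoping identity holds uniformly.
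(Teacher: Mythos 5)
Your proof is correct and follows essentially the same route as the paper's proof via \cref{thm:KC:core}: partition $V(T)$ into the vertices of the two side forests and the bare path, show that the side-forest vertices contribute nonnegatively to $\nu(T')-\nu(T)$ via \cref{cl:monotone:subadd}, and compute the path contribution to be exactly $\xi_{A+B,k-1}+\xi_{0,k-1}-\xi_{A,k-1}-\xi_{B,k-1}$, which is nonnegative by \cref{cl:xi}. Your treatment of the side-forest vertices is actually slightly finer than the paper's: you observe that for $v$ strictly inside some $A_i$ or $B_j$ one has the exact identity $\kappa_v(T)=\kappa_v(T')$ (so $\Delta_v=0$), and only for the roots $u_i$, $w_j$ is subadditivity genuinely needed, whereas the paper invokes subadditivity uniformly for all of $A\cup B$. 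Apart from this, the swapped role of the letters $A,B$ relative to the paper, and a more explicit acknowledgement of the $k=1,2$ degeneracies (which the paper dispatches with ``similar tables can be made''), the two arguments coincide.
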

It would have been nice if for every $v\in V(T)$ we would have had $\nu_v(\KC(T,P))\ge\nu_v(T)$; unfortunately, this is not true in general.  However, the following statement
would suffice.
\begin{theorem}\label{thm:KC:core}
  Let $T$ be a tree and let $x\ne y$ be two vertices with the path between them being bare.
  Denote $T'=\KC(T,x,y)$.
  Let $A$ be the set of vertices $v\ne x$ in $T$ for which every path between $v$ and $y$ passes via $x$, and similarly, let $B$ be the set of vertices $v\ne y$ in $T$ for which every path between $v$ and $x$ passes via $y$.  Let $P$ be the set of vertices on the bare path between $x$ and $y$, so $A\cup B\cup P$ is a partition of $V(T)$.  Then
  \begin{enumerate}
    \item For $v\in A\cup B$ we have that $\nu_v(T')\ge \nu_v(T)$.
    \item $\sum_{v\in P} \nu_v(T') \ge \sum_{v\in P}\nu_v(T)$.
  \end{enumerate}
\end{theorem}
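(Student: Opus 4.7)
The plan is to explicitly enumerate, for each vertex $v$, the multiset of component sizes of $T\star v$ and $T'\star v$, exploiting three structural facts about the KC-transformation: it leaves the $A$-side of $T$ completely untouched, it preserves each subtree $Y_j$ (only changing its point of attachment), and it turns $y$ into a leaf attached to $p_{k-1}$. I set $p_0=x, p_1,\ldots, p_k=y$ for the bare path $P$ and write $X_1,\ldots,X_s$ (respectively $Y_1,\ldots,Y_t$) for the subtrees hanging off $x$ (respectively $y$) with sizes $a_1,\ldots,a_s$ summing to $|A|$ (respectively $b_1,\ldots,b_t$ summing to $|B|$).

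For part (1), I split into cases depending on where $v$ sits. If $v\in A$ is not an $x$-neighbour, or $v\in B$ is neither a $y$-neighbour nor any root $y_j$, then $N_T(v)=N_{T'}(v)$ is contained entirely in one subtree $X_i$ or $Y_j$; removing it yields the same partition of vertices in $T$ and $T'$ (the ``far'' component spans the same vertex set in both), and thus $\nu_v(T)=\nu_v(T')$. The interesting case is when $v$ is an $x$-neighbour inside some $X_i$: here $x$ gets removed, and in $T$ the ``far'' component has size $k+|B|$, whereas in $T'$ it splits into a path remnant of size $k$ together with the individual $Y_j$'s of sizes $b_j$. The change $\alpha_k+\sum_j\alpha_{b_j}-\alpha_{k+|B|}$ is non-negative by iterated subadditivity (\cref{cl:monotone:subadd}). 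The case $v=y_j\in B$ is analogous with the roles of $(x,A)$ and $(y,B)$ swapped.

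For part (2), I compute $\Delta_i:=\nu_{p_i}(T')-\nu_{p_i}(T)$ for each $0\le i\le k$ and sum. For internal $p_i$ with $2\le i\le k-2$, removing $\{p_{i-1},p_i,p_{i+1}\}$ cleanly gives an $x$-side and $y$-side component, differing between $T$ and $T'$ only by the relocation of $B$; hence $\Delta_i=\alpha_{i-1+|A|+|B|}+\alpha_{k-i-1}-\alpha_{i-1+|A|}-\alpha_{k-i-1+|B|}$. The endpoint contributions $\Delta_0,\Delta_1,\Delta_{k-1},\Delta_k$ introduce auxiliary terms $\sum_j\alpha_{b_j}$ (from disconnecting whole $Y_j$'s when $x$ or $y$ is removed) and $\sum_j\sigma_j$ (where $\sigma_j$ is the sum of $\alpha$'s over the components of $Y_j$ with its root deleted); the key observation is that these extras cancel pairwise between $\Delta_0$ and $\Delta_k$ and between $\Delta_1$ and $\Delta_{k-1}$. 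After cancellation and re-indexing, the total collapses to
\begin{equation*}
\sum_{i=0}^k\Delta_i \;=\; \sum_{j=1}^{k-1}\bigl[\alpha_j+\alpha_{j+|A|+|B|}-\alpha_{j+|A|}-\alpha_{j+|B|}\bigr] \;=\; \xi_{0,k-1}+\xi_{|A|+|B|,k-1}-\xi_{|A|,k-1}-\xi_{|B|,k-1},
\end{equation*}
which is non-negative by \cref{cl:xi}.

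The main obstacle is bookkeeping: carefully handling the small-$k$ edge cases ($k=1,2,3$), where the internal range $2\le i\le k-2$ is empty and the endpoint indices $\{0,1,k-1,k\}$ may overlap, and verifying that the $\sigma_j$- and $\alpha_{b_j}$-terms cancel exactly. The clean $\xi$-form at the end is evidently the ``right'' quantity to reach — it is presumably why \cref{cl:xi} was isolated in the preceding subsection — but getting there requires keeping a level head through several case splits.
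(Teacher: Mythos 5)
Your proof is correct and takes essentially the same approach as the paper: tabulate $\nu_{p_i}(T')-\nu_{p_i}(T)$ along the bare path, observe the pairwise cancellation of the auxiliary endpoint terms (between $p_0$ and $p_k$, and between $p_1$ and $p_{k-1}$), reindex the residue to $\sum_{j=1}^{k-1}\bigl(\alpha_{j+|A|+|B|}+\alpha_j-\alpha_{j+|A|}-\alpha_{j+|B|}\bigr)$ and invoke \cref{cl:xi}, while part~(1) reduces to subadditivity (\cref{cl:monotone:subadd}) in both arguments. Your bookkeeping, tracking $\sum_j\alpha_{b_j}$ and $\sum_j\sigma_j$ separately, is if anything slightly more explicit than the paper's compressed shorthands $\alpha_B$ and $\alpha^+_B$.
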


\begin{proof}\
  \begin{enumerate}
    \item
    It suffices to prove the claim for $v\in A$.
    First note that there exists a unique tree $S_v$ in $T\star v$ which is not fully contained in $A$, and the rest of the trees are retained in the KC-transformation.
    The set of trees in $T'\star v$ which are not fully contained in $A$ may be different from $S_v$, but they are on the same vertex set, so the result follows from subadditivity of $\alpha_n$ (\cref{lem:monotone:subadd}).
    \item
    Write $|A|=a$, $|B|=b$ and $|P|=\ell+1$.
    Let $A_1,\ldots,A_s$ be the trees of $T\star x$ which are fully contained in $A$, and denote $a_i=|A_i|$.
    Let $B_1,\ldots,B_t$ be the trees of $T\star y$ which are fully contained in $B$, and denote $b_i=|B_i|$.
    Let $\alpha_A=\sum_{i=1}^s\alpha_{a_i}$, $\alpha^+_A=\sum_{i=1}^s\alpha_{1+a_i}$, $\alpha_B=\sum_{i=1}^t\alpha_{b_i}$ and $\alpha^+_B=\sum_{i=1}^t\alpha_{1+b_i}$.
    Denote the vertices of $P$ by $x=u_0,u_1,\ldots,u_\ell$.  The following table summarises the values of $\nu$ in $T,T'$ along vertices of $P$, in the case where $\ell\ge 3$ (similar tables can be made for the cases $\ell=1,2$).

    \begin{center}
      {\renewcommand{\arraystretch}{1.2}
      \begin{tabular}{@{}lll@{}}
      \toprule
        & $\nu_{u_j}(T)$ & $\nu_{u_j}(T')$ \\\midrule
        $j=0$
        & $\alpha_A + \alpha_{b+\ell-1}$
        & $\alpha_A + \alpha_B + \alpha_{\ell-1}$ \\
        $j=1$
        & $\alpha^+_A + \alpha_{b+\ell-2}$
        & $\alpha^+_A + \alpha^+_B + \alpha_{\ell-2}$ \\
        $2\le j\le \ell-2$
        & $\alpha_{a+j-1} + \alpha_{b+\ell-j-1}$
        & $\alpha_{a+b+j-1} + \alpha_{\ell-j-1}$ \\
        $j=\ell-1$
        & $\alpha_{a+\ell-2} + \alpha^+_B$
        & $\alpha_{a+b+\ell-2}$\\
        $j=\ell$
        & $\alpha_{a+\ell-1} + \alpha_B$
        & $\alpha_{a+b+\ell-1}$\\
        \bottomrule
      \end{tabular}}
    \end{center}

    It follows (for every $\ell\ge 1$) that
    \begin{align*}
    \sum_{v\in P}(\nu_v(T')-\nu_v(T))
    &= \sum_{j=1}^{\ell-1} \left(
    \alpha_{a+b+j} + \alpha_j - \alpha_{a+j} - \alpha_{b+j}
    \right)\\
    &= \xi_{a+b,\ell-1} + \xi_{0,\ell-1} - \xi_{a,\ell-1} - \xi_{b,\ell-1},
    \end{align*}
    which is, by \cref{lem:xi}, nonnegative.\qedhere
  \end{enumerate}
\end{proof}

\subsection{Properties of $\alpha_n$}\label{sec:calc}
We head on to prove \cref{lem:monotone:subadd,lem:xi}.
To simplify presentation, we introduce the following notation.
For a sequence $x_n$ we write
\begin{equation*}
\Delta_h^d x_n=\sum_{k=0}^d(-1)^k\binom{d}{k}x_{n+(d-k)h}
\end{equation*}
to denote its \defn{$d$'th order $h$-forward difference}.  When $h=1$ we omit the subscript, and when $d=1$ we omit the superscript.
The following identities will be useful.  Using \eqref{eq:alpha:explicit},
\begin{equation*}\label{eq:alpha:Delta}
\Delta\alpha_n
= \sum_{i=0}^n \frac{(-2)^i(n+1-i)}{(i+1)!}
-\sum_{i=0}^{n-1} \frac{(-2)^i(n-i)}{(i+1)!}
= \sum_{i=0}^n \frac{(-2)^i}{(i+1)!},
\end{equation*}
and
\begin{equation}\label{eq:alpha:Delta:2}
\Delta^2\alpha_n = \frac{(-2)^{n+1}}{(n+2)!}.
\end{equation}
Note also that since $\alpha_0=0$,
\begin{equation}\label{eq:alpha:sum}
\alpha_n = \sum_{i=0}^{n-1}\Delta\alpha_i,
\end{equation}
and since $\Delta\alpha_0=1=\Delta^2\alpha_{-1}$,
\begin{equation}\label{eq:alpha:Delta:sum}
\Delta\alpha_i = \sum_{j=-1}^{i-1}\Delta^2\alpha_j.
\end{equation}

We proceed by a (rather long) sequence of technical claims,
which will be used in the proofs of \cref{lem:monotone:subadd,lem:xi}.

\begin{lemma}\label{lem:altering}
  Fix $k\ge 0$.
  Let $z_n$ be a real nonnegative decreasing sequence.
  Then $y_{k,n} = (-1)^k\sum_{j=k}^{k+n} (-1)^j z_j$ is nonnegative.
\end{lemma}

\begin{proof}
  Note that for every $\ell\ge 0$ we have that
  \begin{equation*}
  (-1)^{k+2\ell} z_{k+2\ell}\qquad\text{and}\qquad
  (-1)^{k+2\ell} z_{k+2\ell} + (-1)^{k+2\ell+1} z_{k+2\ell+1}
  \end{equation*}
  are both nonnegative if $k$ is even, and both nonpositive otherwise.  The claim easily follows.
\end{proof}

\begin{lemma}\label{lem:altering:subadd}
  Let $z_n$ be a real nonnegative decreasing sequence, which is convex for $n\ge 1$.
  Then, the sequence $x_n=\sum_{i=0}^{n-1}\sum_{j=0}^i (-1)^j z_j$ is monotone increasing and subadditive.
\end{lemma}

\begin{proof}
  For $i,k\ge 0$, write $y_{k,i} = (-1)^k\sum_{j=k}^{k+i} (-1)^j z_j$.
  By \cref{lem:altering} $y_{k,i}\ge 0$, hence $\Delta x_n=y_{0,n}\ge 0$, and $x_n$ is monotone increasing.  Fix $m\ge 1$ and write $b_i=y_{i,m-1}$.  We have that
  \begin{align*}
  a_n := x_{m+n} - x_m - x_n
  &= \sum_{i=0}^{m+n-1} y_{0,i} - \sum_{i=0}^{m-1} y_{0,i}
  - \sum_{i=0}^{n-1} y_{0,i}\\
  &= \sum_{i=0}^{n-1} (y_{0,m+i}-y_{0,i})
  = \sum_{i=0}^{n-1} (-1)^{i+1} y_{i+1,m-1}
  = \sum_{i=1}^n (-1)^i b_i.
  \end{align*}
  Moreover,
  \begin{align*}
  \Delta b_i
  &= (-1)^{i+1}\sum_{j=i+1}^{i+m}(-1)^j z_j - (-1)^i\sum_{j=i}^{i+m-1}(-1)^j z_j\\
  &= (-1)^{i+1}\sum_{j=i}^{i+m-1}(-1)^{j+1} z_{j+1}
  + (-1)^{i+1}\sum_{j=i}^{i+m-1}(-1)^j z_j\\
  &= (-1)^{i+1}\sum_{j=i}^{i+m-1}(-1)^j (-\Delta z_j).
  \end{align*}
  Now, $-\Delta z_j$ is nonnegative, and for $j\ge 1$ it is also decreasing (since $\Delta^2 z_j\ge 0$), thus for $i\ge 1$, by \cref{lem:altering}, $\Delta b_i\le 0$.  Therefore, $b_i$ is nonnegative and decreasing (for $i\ge 1$), hence by \cref{lem:altering}, $a_n\le 0$ for every $n\ge 0$, and thus $x_n$ is subadditive.
\end{proof}

Define \[\beta_n=(-1)^n\Delta^2\alpha_{n-1}.\]
\begin{claim}\label{cl:beta}
  $\beta_n$ is nonnegative and decreasing, and convex for $n\ge 1$.
\end{claim}

\begin{proof}
  From \eqref{eq:alpha:Delta:2} we know that $\beta_n=2^n/(n+1)!>0$.  Moreover,
  \begin{equation*}
  \Delta\beta_n = \frac{2^{n+1}}{(n+2)!}-\frac{2^n}{(n+1)!}
  = \frac{2^n}{(n+1)!}\left( \frac{2}{n+2} - 1\right) \le 0,
  \end{equation*}
  and, for $n\ge 1$,
  \begin{align*}
  \Delta^2\beta_n
  &= \frac{2^{n+2}}{(n+3)!}-\frac{2\cdot 2^{n+1}}{(n+2)!}+\frac{2^n}{(n+1)!}\\
  &= \frac{2^n}{(n+1)!} \left(
  \frac{4}{(n+2)(n+3)} - \frac{4}{n+2} + 1
  \right) \ge 0.\qedhere
  \end{align*}
\end{proof}

We are now ready to prove \cref{lem:monotone:subadd}.
\begin{proof}[Proof of \cref{lem:monotone:subadd}]
  From \eqref{eq:alpha:sum} and \eqref{eq:alpha:Delta:sum} it follows that
  \begin{equation*}
  \alpha_n
  = \sum_{i=0}^{n-1}\Delta\alpha_i
  = \sum_{i=0}^{n-1}\sum_{j=-1}^{i-1}\Delta^2\alpha_j
  = \sum_{i=0}^{n-1}\sum_{j=0}^{i}(-1)^j \beta_j,
  \end{equation*}
  and the result follows from \cref{lem:altering:subadd,cl:beta}.
\end{proof}

Define \[\gamma_n=(-1)^{n+1}\Delta\Delta_2\alpha_n.\]
\begin{claim}\label{cl:gamma}
  $\gamma_n$ is nonnegative and decreasing, and convex for $n\ge 1$.
\end{claim}

\begin{proof}
  Note that (using \eqref{eq:alpha:Delta:2})
  \begin{align*}
  \gamma_n
  &= (-1)^{n+1}\left(\Delta_2\alpha_{n+1}-\Delta_2\alpha_n\right)\\
  &= (-1)^{n+1}\left(\alpha_{n+3}-\alpha_{n+1}-\alpha_{n+2}+\alpha_n\right)\\
  &= (-1)^{n+1}\left(\Delta\alpha_{n+2}-\Delta\alpha_n\right)\\
  &= (-1)^{n+1}\left((\Delta\alpha_{n+2}-\Delta\alpha_{n+1})
    +(\Delta\alpha_{n+1}-\Delta\alpha_n)\right)\\
  &= (-1)^{n+1}\left(\Delta^2\alpha_{n+1}
    +\Delta^2\alpha_n\right)\\
  &= (-1)^{n+1}\left(
  \frac{(-2)^{n+2}}{(n+3)!} + \frac{(-2)^{n+1}}{(n+2)!}
  \right)\\
  &= \frac{2^{n+1}}{(n+2)!}\left(
  \frac{-2}{n+3}+1
  \right) > 0.
  \end{align*}
  Moreover,
  \begin{align*}
  \Delta\gamma_n
  &= (-1)^{n+2}\left(\Delta_2\alpha_{n+2}-\Delta_2\alpha_{n+1}\right)
  -(-1)^{n+1}\left(\Delta_2\alpha_{n+1}-\Delta_2\alpha_n\right)\\
  &= (-1)^n\left(\Delta_2\alpha_{n+2}-\Delta_2\alpha_n\right)\\
  &= (-1)^n\left(\Delta\alpha_{n+3}+\Delta\alpha_{n+2}-\Delta\alpha_{n+1}-\Delta\alpha_n\right)\\
  &= (-1)^n\left((\Delta\alpha_{n+3}-\Delta\alpha_{n+2})
    +2(\Delta\alpha_{n+2}-\Delta\alpha_{n+1})
    +(\Delta\alpha_{n+1}-\Delta\alpha_n)\right)\\
  &= (-1)^n\left(\Delta^2\alpha_{n+2}
    +2\Delta^2\alpha_{n+1}
    +\Delta^2\alpha_n\right)\\
  &= (-1)^n\left( \frac{(-2)^{n+3}}{(n+4)!} + 2\cdot\frac{(-2)^{n+2}}{(n+3)!}
  +\frac{(-2)^{n+1}}{(n+2)!}\right)\\
  &= \frac{2^{n+1}}{(n+2)!}\left(-\frac{4}{(n+3)(n+4)} + \frac{4}{n+3} - 1\right) \le 0,
  \end{align*}
  so $\gamma_n$ is decreasing.  Finally,
  \begin{align*}
  \Delta^2\gamma_n
  &= \gamma_{n+2}-2\gamma_{n+1}+\gamma_n\\
  &= (-1)^{n+1}\left(
  \Delta_2\alpha_{n+3} + \Delta_2\alpha_{n+2}
  - \Delta_2\alpha_{n+1} - \Delta_2\alpha_n
  \right)\\
  &= (-1)^{n+1}\left(
  \alpha_{n+5}+\alpha_{n+4}
  -2\alpha_{n+3}-2\alpha_{n+2}
  +\alpha_{n+1}+\alpha_n
  \right)\\
  &= (-1)^{n+1}\left(
  \Delta\alpha_{n+4}+2\Delta\alpha_{n+3}
  -2\Delta\alpha_{n+1}-\Delta\alpha_n
  \right)\\
  &= (-1)^{n+1}\left(
  (\Delta\alpha_{n+4}-\Delta\alpha_{n+3})
  +3(\Delta\alpha_{n+3}-\Delta\alpha_{n+2})\right.\\
  &\phantom{= (-1)^{n+1}(}\left.
  +3(\Delta\alpha_{n+2}-\Delta\alpha_{n+1})
  +(\Delta\alpha_{n+1}-\Delta\alpha_n)
  \right)\\
  &= (-1)^{n+1}\left(
  \Delta^2\alpha_{n+3}+3\Delta^2\alpha_{n+2}
  +3\Delta^2\alpha_{n+1}+\Delta^2\alpha_n
  \right)\\
  &= (-1)^{n+1}\left(
  \frac{(-2)^{n+4}}{(n+5)!}
  + 3\cdot\frac{(-2)^{n+3}}{(n+4)!}
  + 3\cdot\frac{(-2)^{n+2}}{(n+3)!}
  + \frac{(-2)^{n+1}}{(n+2)!}
  \right)\\
  &= \frac{2^{n+1}}{(n+2)!}\left(
  -\frac{8}{(n+5)(n+4)(n+3)}
  +\frac{12}{(n+4)(n+3)}
  -\frac{6}{(n+3)} + 1
  \right),
  \end{align*}
  which is nonnegative for $n\ge 1$.
\end{proof}

For $a,b\ge 1$, define \[\eta^{a,b}_n=(-1)^{n+1}\Delta_a\Delta_b\alpha_n.\]
\begin{claim}\label{cl:eta:m:2}
  For every $a\ge 1$, $\eta^{a,2}_n$ is nonnegative.
\end{claim}

\begin{proof}
  Note that
  \begin{align*}
  \eta^{a,2}_n
  &= (-1)^{n+1}\left(\Delta_2\alpha_{a+n}-\Delta_2\alpha_n\right)\\
  &= (-1)^{n+1}\sum_{j=n}^{a+n-1}\Delta\Delta_2\alpha_j
  = (-1)^n \sum_{j=n}^{a+n-1}(-1)^j\gamma_j,
  \end{align*}
  which is, by \cref{lem:altering,cl:gamma}, nonnegative.
\end{proof}

\iffalse
Define $\varphi_n=(-1)^{n+1}(\Delta\Delta_2\alpha_{n+1}+\Delta\Delta_2\alpha_n)$.

\begin{claim}\label{cl:phi}
  $\varphi_n$ is nonnegative, and decreasing for $n\ge 1$.
\end{claim}

\begin{proof}
  Note that
  \begin{equation*}
  \varphi_n = (-1)^{n+1}\left( \Delta_2\alpha_{n+2}-\Delta_2\alpha_n \right)
  = (-1)^{n+1}\Delta_2\Delta_2\alpha_n = \eta^{2,2}_n,
  \end{equation*}
  which is, by \cref{cl:eta:m:2}, nonnegative.  Moreover,
  \begin{align*}
  \Delta\varphi_n
  &= \eta^{2,2}_{n+1}-\eta^{2,2}_n\\
  &= \gamma_{n+1}-\gamma_{n+2}+\gamma_{n+1}-\gamma_n = -\Delta^2\gamma_n
  \end{align*}
  which is, by \cref{cl:gamma}, nonpositive for $n\ge 1$.
\end{proof}

\begin{claim}\label{cl:eta:m:2:dec}
  For every $a\ge 0$, $\eta^{a,2}_n$ is decreasing for $n\ge 1$.
\end{claim}

\begin{proof}
  Note that
  \begin{align*}
  \Delta\eta^{a,2}_n
  &= (-1)^n\left(
  \Delta_2\alpha_{a+n+1}-\Delta_2\alpha_{n+1}+\Delta_2\alpha_{a+n}-\Delta_2\alpha_n
  \right)\\
  &= (-1)^n \sum_{j=n}^{a+n-1}\left(\Delta\Delta_2\alpha_{j+1} + \Delta\Delta_2\alpha_j\right)
  = (-1)^{n+1} \sum_{j=n}^{a+n-1}(-1)^j\varphi_j,
  \end{align*}
  which is, by \cref{lem:altering,cl:phi}, nonpositive for $n\ge 1$.
\end{proof}
\fi

\begin{claim}\label{cl:eta:m:1}
  For every $a\ge 1$, $\eta^{a,1}_n$ is nonnegative and decreasing.
\end{claim}

\begin{proof}
  Note that
  \begin{align*}
  \eta^{a,1}_n
  &= (-1)^{n+1}\left(\Delta\alpha_{a+n}-\Delta\alpha_n\right)\\
  &= (-1)^{n+1} \sum_{j=n}^{a+n-1}\Delta^2\alpha_j
  = (-1)^n \sum_{j=n}^{a+n-1}(-1)^j\Delta^2\beta_{j+1},
  \end{align*}
  which is, by \cref{lem:altering,cl:beta}, nonnegative.
  Moreover,
  \begin{align*}
  \Delta\eta^{a,1}_n
  &= (-1)^n\left(
  \Delta\alpha_{a+n+1}-\Delta\alpha_{n+1} + \Delta\alpha_{a+n} - \Delta\alpha_n
  \right)\\
  &= (-1)^n\left( \Delta_2\alpha_{a+n} - \Delta_2\alpha_n \right)
  = (-1)^n\Delta_a\Delta_2\alpha_n = -\eta^{a,2}_n,
  \end{align*}
  which is, by \cref{cl:eta:m:2}, nonpositive, hence $\eta^{a,1}_n$ is decreasing.
\end{proof}

Define
\[\psi^b_n=(-1)^{n+1}(\Delta\Delta_b\alpha_{n+1}+\Delta\Delta_b\alpha_n).\]

\begin{claim}\label{cl:psi}
  For every $b\ge 1$,
  $\psi^b_n$ is nonnegative, and decreasing for $n\ge 1$.
\end{claim}

\begin{proof}
  Note that
  \begin{equation*}
  \psi^b_n
  = (-1)^{n+1} \left( \Delta_b\alpha_{n+2} - \Delta_b\alpha_n \right)
  = (-1)^{n+1}\Delta_2\Delta_b\alpha_n
  = (-1)^{n+1}\Delta_b\Delta_2\alpha_n
  = \eta^{b,2}_n,
  \end{equation*}
  which is, by \cref{cl:eta:m:2}, nonnegative.
  Moreover,
  \begin{align*}
  \Delta\psi^b_n
  &= \eta^{b,2}_{n+1} - \eta^{b,2}_n\\
  &= (-1)^{n+1}\sum_{j=n+1}^{b+n}(-1)^j\gamma_j
  -(-1)^n    \sum_{j=n}^{b+n-1}(-1)^j\gamma_j\\
  &= (-1)^{n+1}\sum_{j=n}^{b+n-1}
  \left( (-1)^{j+1}\gamma_{j+1} + (-1)^j\gamma_j \right)\\
  &= (-1)^{n+1}\sum_{j=n}^{b+n-1}
  (-1)^{j+1}\Delta\gamma_j.
  \end{align*}
  By \cref{cl:gamma}, the sequence $-\Delta\gamma_n$ is nonnegative, and decreasing for $n\ge 1$.  Therefore, by \cref{lem:altering}, $\Delta\psi^b_n$ is nonpositive, thus $\psi^b_n$ is decreasing (for $n\ge 1$).
\end{proof}

\begin{claim}\label{cl:eta}
  For every $a,b\ge 1$, $\eta^{a,b}_n$ is nonnegative, and decreasing for $n\ge 1$.
\end{claim}

\begin{proof}
  Note that
  \begin{align*}
  \eta^{a,b}_n
  &= (-1)^{n+1} (\Delta_b\alpha_{a+n}-\Delta_b\alpha_n)\\
  &= (-1)^{n+1} \sum_{j=n}^{a+n-1} \Delta_b\Delta\alpha_j\\
  &= (-1)^n \sum_{j=n}^{a+n-1} (-1)^j\eta^{b,1}_j,
  \end{align*}
  which is, by \cref{lem:altering,cl:eta:m:1}, nonnegative.
  Moreover,
  \begin{align*}
  \Delta\eta^{a,b}_n
  &= (-1)^{n+2} (\Delta_b\alpha_{a+n+1}-\Delta_b\alpha_{n+1})
  -(-1)^{n+1} (\Delta_b\alpha_{a+n}-\Delta_b\alpha_n)\\
  &= (-1)^n \left(
  \Delta_b\alpha_{a+n+1} - \Delta_b\alpha_{n+1}
  + \Delta_b\alpha_{a+n} - \Delta_b\alpha_n
  \right)\\
  &= (-1)^n \sum_{j=n}^{a+n-1} \left(
  \Delta\Delta_b\alpha_{j+1} + \Delta\Delta_b\alpha_j
  \right)\\
  &= (-1)^{n+1} \sum_{j=n}^{a+n-1} (-1)^{j}\psi^b_n,
  \end{align*}
  which is, for $n\ge 1$, by \cref{lem:altering,cl:psi}, nonpositive, hence $\eta^{a,b}_n$ is decreasing (for $n\ge 1$).
\end{proof}

We are now ready to prove \cref{lem:xi}.
\begin{proof}[Proof of \cref{lem:xi}]
  Note that
  \begin{equation*}
  \xi_{a+b,\ell} + \xi_{0,\ell} - \xi_{a,\ell} - \xi_{b,\ell}
  = \sum_{j=1}^{\ell} \left(\alpha_{a+b+j} + \alpha_j - \alpha_{a+j} - \alpha_{b+j}\right)
  = -\sum_{j=1}^{\ell} (-1)^j \eta^{a,b}_j,
  \end{equation*}
  which is, by \cref{lem:altering,cl:eta}, nonnegative.
\end{proof}

\section{Concluding remarks and open questions}\label{sec:open}

\paragraph*{Non locally tree-like graph sequences}
Our local limit approach does not assume that the converging sequence is locally tree-like.
However, the differential equation tool fails if short cycles appear in a typical local view.
As it seems, to date, there is no general tool to handle these cases, and indeed, even the asymptotic behaviour of the random greedy MIS algorithm on $d$-dimensional tori (for $d\ge 2$) remains unknown.

\paragraph*{Better local rules}
The random greedy algorithm presented here follows a straightforward local rule.
More complicated local rules may yield, in some cases, larger maximal independent sets; for example, the initial random ordering may ``favour'' low degree vertices.
It would be nice to adapt our framework, or at least some of its components, to other settings.
For \emph{adaptive} ``better'' local algorithms, we refer the reader to \cites{Wor95,Wor03}.

\paragraph*{The second colour}
In this work, we have analysed the output of the random greedy algorithm for producing a maximal independent set.
As already remarked, this is, in fact, the set of vertices in the first colour class in the random greedy colouring algorithm.
It is relatively easy to see that, after slight modifications (in particular, in~\cref{thm:de}), this approach allows us to calculate the asymptotic proportion of the size of the set of vertices in the second colour class (or in the k’th colour class in general, for any fixed k) as well.
Non-asymptotic questions about the expected cardinality of the set of vertices in the second colour class might also be of interest.
For example, is it true that the path has the smallest expected number of vertices in the first two colour classes among all trees of the same order?
It is not hard to see that this statement is not true for the first three colour classes (as three colours suffice to colour the path greedily).

\paragraph*{Monotonicity with respect to KC-transformations}
The expected greedy independence ratio in trees is likely to be monotone with respect to KC-transformations and strictly monotone with respect to proper KC-transformations.
If true, this would imply that the greedy independence ratio in trees achieves its {\em unique} minimum on the path.

~

\begin{acknowledgement}
  The authors wish to express their thanks
  to the organisers of the Joint FUB--TAU Workshop on Graph and Hypergraph Colouring,
  hosted by the Freie Universit\"at Berlin in 2018,
  and to Michal Amir, Lior Gishboliner, Matan Harel, Frank Mousset, Matan Shalev and Yinon Spinka
  for useful discussions and ideas.
  We also thank the anonymous referees for the valuable input and suggestions they provided
  that improved the quality of the paper.
\end{acknowledgement}

\bibliography{library}

\end{document}